\newcommand{\T}{\mathscr{T}}
\newcommand{\IO}{\mathscr{IO}}
\newcommand{\On}{\mathscr{O}_n}
\newcommand{\wEnd}{\mathrm{wEnd}}
\theoremstyle{plain}
\newtheorem{theorem}{Theorem}[section]
\newtheorem{proposition}[theorem]{Proposition}
\newtheorem{lemma}[theorem]{Lemma}
\newtheorem{corollary}[theorem]{Corollary}
\def\im{\mathop{\mathrm{Im}}\nolimits} 
\def\N{\mathbb N}
\def\<{\leqslant}
\def\>{\geqslant}
\begin{document}

\title{On the monoid of order-preserving transformations of a finite chain whose ranges are intervals} 

\author{V\'\i tor H. Fernandes\footnote{This work is funded by national funds through the FCT - Funda\c c\~ao para a Ci\^encia e a Tecnologia, I.P., under the scope of the projects UIDB/00297/2020 (https://doi.org/10.54499/UIDB/00297/2020) and UIDP/00297/2020 (https://doi.org/10.54499/UIDP/00297/2020) (Center for Mathematics and Applications).}}

\maketitle

\vspace*{-4em}

\renewcommand{\thefootnote}{}

\footnote{2020 \emph{Mathematics Subject Classification}: 20M20, 20M05, 20M10.}

\footnote{\emph{Keywords}: order-preserving, order-decreasing, transformations, presentations.}

\renewcommand{\thefootnote}{\arabic{footnote}}
\setcounter{footnote}{0}

\begin{abstract} 
In this note we give a presentation for the monoid 
$\IO_n$ of all order-preserving transformations of a $n$-chain whose ranges are intervals. 
We also consider the submonoid $\IO_n^-$ of $\IO_n$ consisting of order-decreasing transformations, for which we determine the cardinality, the rank and a presentation. 
\end{abstract}

\section{Introduction}\label{intro}  

Let $n\in\N$ and $\Omega_n=\{1,\ldots,n\}$. Denote by $\T_n$ the monoid (under composition) of all full transformations of $\Omega_n$. 
From now on, we consider $\Omega_n$ equipped with the usual linear order $1<\cdots<n$. 
Recall that a transformation $\alpha$ of $\Omega_n$ is said to be \textit{order-preserving} 
if $x\leqslant y$ implies $x\alpha\leqslant y\alpha$, for all $x,y\in\Omega_n$. 
Denote by $\On$ the submonoid of $\T_n$ of all order-preserving transformations.   

\smallskip 

Semigroups of order-preserving transformations have been studied extensively for over six decades. 
The oldest papers known to the author date back to 1962, are by A\v\i zen\v stat \cite{Aizenstat:1962,Aizenstat:1962b}  and give a presentation and a description of the congruences of $\On$.  
After that and up to the present day, many other papers have been published studying various properties of $\On$. 
See, for example, 
\cite{Fernandes:1997,
Fernandes&al:2010, 
Fernandes&Volkov:2010,
Gomes&Howie:1992, 
Higgins:1995,
Howie:1971,
Laradji&Umar:2006,
Vernitskii&Volkov:1995}. 
Nonetheless, these semigroups still reveal some mystery. The problem of decidability of the pseudovariety generated by the family $\On$, with $n\in\N$, proposed by  J.-E. Pin in the ``Szeged International Semigroup Colloquium" (1987), is perhaps the oldest and most important that remains to be solved. 

\smallskip 

The main object of this work is the submonoid $\IO_n$ of $\On$ consisting of all transformations $\alpha\in\On$ such that the image of $\alpha$ 
is an interval of $\Omega_n$, i.e. $x\<y\<z$ and $x,z\in\im(\alpha)$ imply $y\in\im(\alpha)$, for all $x,y,z\in\Omega_n$. 

The monoid $\IO_n$ is not only a very natural submonoid of $\On$ but also coincides with the monoid of all \textit{weak endomorphisms} 
of a directed path with $n$ vertices. See \cite{Fernandes&Paulista:2023}. 
In the aforementioned paper, the authors studied the regularity, determined the rank, the cardinality and the number of idempotents of $\IO_n$ 
(denoted there by $\wEnd\vec{P}_n$). 
In this note, our main aim is to exhibit a presentation for $\IO_n$. 
We also consider its submonoids $\IO_n^-$ consisting of order-decreasing transformations  
and $\IO_n^+$ consisting of order-increasing transformations, 
for which we determine the cardinalities, the ranks and presentations.

\medskip 

Recall that the \textit{rank} of a (finite) monoid $M$ is the minimum size that a generating set of $M$ can have. 

Next, we also recall some notions related to the concept of a monoid presentation. 

Let $A$ be an alphabet and consider the free monoid $A^*$ generated by $A$.
The elements of $A$ and of $A^*$ are called \textit{letters} and \textit{words}, respectively. 
The empty word is denoted by $1$ and we write $A^+$ to express $A^*\setminus\{1\}$. 
The number of letters of a word $w\in A^*$ is called the \textit{length} of $w$ and it is denoted by $|w|$.  
A pair $(u,v)$ of $A^*\times A^*$ is called a
\textit{relation} of $A^*$ and it is usually represented by $u=v$. 
A \textit{monoid presentation} is an ordered pair
$\langle A\mid R\rangle$, where $R\subseteq A^*\times A^*$ is a set of relations of
the free monoid $A^*$. 
Denote by $\sim_R$ the smallest congruence on $A^*$ containing $R$. 
A monoid $M$ is said to be \textit{defined by a presentation} $\langle A\mid R\rangle$ if $M$ is
isomorphic to $A^*/\sim_R$.  
Let $X$ be a generating set of a monoid $M$ and let $\phi: A\longrightarrow M$ be an injective mapping
such that $A\phi=X$.
Let $\varphi: A^*\longrightarrow M$ be the (surjective) homomorphism of monoids that extends $\phi$ to $A^*$.
We say that $X$ satisfies (via $\varphi$) a relation $u=v$ of $A^*$ if $u\varphi=v\varphi$. 
For more details see \cite{Ruskuc:1995}.

Remember that, a usual method to find a presentation for a finite monoid
is described by the following result (adapted to the monoid case from
\cite[Proposition 3.2.2]{Ruskuc:1995}). 

\begin{proposition}[Guess and Prove method] \label{ruskuc} 
Let $M$ be a finite monoid generated by a set $X$, 
let $A$ be an alphabet and let $\phi: A\longrightarrow M$ be an injective mapping
such that $A\phi=X$. 
Let $\varphi:A^*\longrightarrow M$ be the (surjective) homomorphism of monoids 
that extends $\phi$ to $A^*$, let $R\subseteq A^*\times A^*$ 
and $W\subseteq A^*$. Assume that the following conditions are
satisfied:
\begin{enumerate}
\item The generating set $X$ of $M$ satisfies (via $\varphi$) all relations from $R$;
\item For each word $w\in X^*$, there exists a word $w'\in W$ such that $w\sim_R w'$;
\item $|W|\leqslant |M|$.
\end{enumerate}
Then, $M$ is defined by the presentation $\langle A\mid R\rangle$. 
\end{proposition}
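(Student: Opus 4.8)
The plan is to construct the canonical surjection from $A^*/\sim_R$ onto $M$ induced by $\varphi$ and then to force it to be injective by a cardinality count. First I would use condition (1). Write $\ker\varphi = \{(u,v) \in A^* \times A^* : u\varphi = v\varphi\}$ for the kernel congruence of $\varphi$. Since $X$ satisfies via $\varphi$ every relation of $R$, we have $R \subseteq \ker\varphi$; as $\ker\varphi$ is a congruence on $A^*$ and $\sim_R$ is, by definition, the smallest congruence containing $R$, it follows that $\sim_R \subseteq \ker\varphi$. This inclusion is exactly what allows $\varphi$ to factor through the quotient map: there is a well-defined monoid homomorphism $\overline{\varphi} : A^*/\sim_R \longrightarrow M$ sending the $\sim_R$-class of a word $w$ to $w\varphi$. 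Since $\varphi$ is surjective, so is $\overline{\varphi}$, and hence $|A^*/\sim_R| \geqslant |M|$.

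Next I would bound the quotient from above. By condition (2), every word of $A^*$ is $\sim_R$-equivalent to some word of $W$, so $W$ meets every $\sim_R$-class and therefore $A^*/\sim_R$ has at most $|W|$ elements. Together with condition (3), this gives $|A^*/\sim_R| \leqslant |W| \leqslant |M|$. Combining the two estimates yields $|A^*/\sim_R| = |M|$, a finite number; hence the surjection $\overline{\varphi}$ between two finite monoids of equal cardinality must be a bijection, and therefore an isomorphism. Thus $M \cong A^*/\sim_R$, that is, $M$ is defined by the presentation $\langle A \mid R\rangle$, as required.

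I do not anticipate a genuine obstacle here: the whole argument is the interplay between the universal property of $\sim_R$ (being the smallest congruence containing $R$) and the elementary fact that a surjection between equinumerous finite sets is a bijection. The single point that needs care is the direction of the factorization --- one must check $\sim_R \subseteq \ker\varphi$, so that $\overline{\varphi}$ is well defined, rather than the reverse inclusion, and it is precisely condition (1) that supplies this. A secondary, purely bookkeeping, remark is that condition (2) has to be read as quantifying over all words $w \in A^*$ in order for $W$ to genuinely exhaust the quotient.
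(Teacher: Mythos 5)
Your proof is correct. The paper itself does not prove this proposition --- it is quoted as a known tool, adapted from Proposition 3.2.2 of Ru\v{s}kuc's thesis --- but your argument is exactly the standard one underlying that result: condition (1) gives $R\subseteq\ker\varphi$, hence $\sim_R\subseteq\ker\varphi$ by minimality of $\sim_R$, so $\varphi$ factors through a surjection $\overline{\varphi}:A^*/\!\sim_R\longrightarrow M$; conditions (2) and (3) bound $|A^*/\!\sim_R|\leqslant|W|\leqslant|M|$, and a surjection between finite sets of equal cardinality is a bijection, hence an isomorphism. Your two side remarks are also well taken: the direction of the inclusion $\sim_R\subseteq\ker\varphi$ is the one that matters for well-definedness, and condition (2), as printed with ``$w\in X^*$'', must indeed be read as ranging over all of $A^*$ (this also explains the paper's closing observation that equality $|W|=|M|$ is forced).
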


Notice that, if $W$ satisfies the above conditions then, in fact,
$|W|=|M|$. 

\medskip 

For general background on Semigroup
Theory and standard notation, we refer the reader to Howie's book \cite{Howie:1995}. 
We would also like to mention the use of computational tools, namely GAP \cite{GAP4}.

\section{Preliminaries}\label{pre} 

We start this section by recalling the cardinality, the rank and generators of $\IO_n$ that have been established in \cite{Fernandes&Paulista:2023}. 
By \cite[Theorem 2.6]{Fernandes&Paulista:2023}, we have $|\IO_n|=\sum_{k=1}^{n}(n-k+1)\binom{n-1}{k-1}$, whence 
$$
|\IO_n|=(n+1)2^{n-2}.
$$ 
Next, for $n\geqslant 2$, define 
$$
a_{i}=
\begin{pmatrix} 
1 & \cdots & i & i+1 & \cdots & n \\ 
1 & \cdots & i & i & \cdots & n-1\\
\end{pmatrix}
\quad\text{and}\quad
b_{i}=
\begin{pmatrix} 
1 & \cdots & i & i+1 & \cdots & n \\ 
2 & \cdots & i+1 & i+1 & \cdots & n\\
\end{pmatrix},
$$
for $1\<i\<n-1$. Clearly, $a_{1},\ldots ,a_{n-1}, b_{1},\ldots ,b_{n-1}\in \IO_n$. 
Moreover, by \cite[Proposition 3.3 and Theorem 3.5]{Fernandes&Paulista:2023}, for $n\>3$, 
$\{ a_{1},\ldots ,a_{n-2},b_{n-1} \}$ is a generating set of $\IO_n$ of minimum size. 
In particular, for $n\>3$, the monoid $\IO_n$ has rank $n-1$. 
Notice that $\IO_1=\T_1$ and 
$\IO_2=\left\{\left(\begin{smallmatrix} 1 &2 \\ 1 &1 \\ \end{smallmatrix}\right),\left(\begin{smallmatrix} 1 &2 \\ 1 &2 \\ \end{smallmatrix}\right),
\left(\begin{smallmatrix} 1 &2 \\ 2 &2 \\ \end{smallmatrix}\right)\right\}$ and so, clearly, $\IO_1$ has rank $0$ and $\IO_2$ has rank $2$. 

\smallskip 

Now, observe that, for $1\leqslant i,j\leqslant n-1$, we have 
$$
a_ia_j = \left\{
\begin{array}{ll}
\begin{pmatrix} 
1 & \cdots & i & i+1 & \cdots &j+1 &j+2&\cdots & n \\ 
1 & \cdots & i & i & \cdots & j& j& \cdots & n-2\\
\end{pmatrix} & \mbox{if $i\leqslant j$} \\
\begin{pmatrix} 
1 & \cdots & j & j+1 & \cdots &i &i+1&\cdots & n \\ 
1 & \cdots & j & j & \cdots & i-1& i-1& \cdots & n-2\\
\end{pmatrix} & \mbox{if $i > j$}, 
\end{array}
\right. 
$$
$$
a_ib_j = \left\{
\begin{array}{ll}
\begin{pmatrix} 
1 & \cdots & i & i+1 & \cdots &j+1 &j+2&\cdots & n \\ 
2 & \cdots & i+1 & i+1 & \cdots & j+1& j+1& \cdots & n-1\\
\end{pmatrix} & \mbox{if $i\leqslant j$} \\
\begin{pmatrix} 
1 & \cdots & j & j+1 & \cdots &i &i+1&\cdots & n \\ 
2 & \cdots & j+1 & j+1 & \cdots & i& i& \cdots & n-1\\
\end{pmatrix} & \mbox{if $i > j$}, 
\end{array}
\right. 
$$
$$
b_ib_j = \left\{
\begin{array}{ll}
\begin{pmatrix} 
1 & \cdots & i & i+1 & \cdots &j &j+1&\cdots & n \\ 
3 & \cdots & i+2 & i+2& \cdots & j+1& j+1& \cdots & n\\
\end{pmatrix} & \mbox{if $i<j$} \\
\begin{pmatrix} 
1 & \cdots & j-1 & j & \cdots &i &i+1&\cdots & n \\ 
3 & \cdots & j +1& j+1 & \cdots & i+1& i+1& \cdots & n\\
\end{pmatrix} & \mbox{if $i \geqslant j$} 
\end{array}
\right. 
$$
and 
$$
b_ia_j = \left\{
\begin{array}{ll}
\begin{pmatrix} 
1 & \cdots & i & i+1 & \cdots &j &j+1&\cdots & n \\ 
2 & \cdots & i+1 & i+1& \cdots & j& j& \cdots & n-1\\
\end{pmatrix} & \mbox{if $i<j$} \\
\begin{pmatrix} 
1 & \cdots & j-1 & j & \cdots &i &i+1&\cdots & n \\ 
2 & \cdots & j & j & \cdots & i& i& \cdots & n-1\\
\end{pmatrix} & \mbox{if $i \geqslant j$} , 
\end{array}
\right. 
$$
from which we can deduce the equalities 
\begin{equation}\label{eq1}
\mbox{$a_ia_{n-1}=a_i=b_ia_1$~ and ~$b_ib_1=b_i=a_ib_{n-1}$,~ for $1\leqslant i\leqslant n-1$}, 
\end{equation}
\begin{equation}\label{eq2}
\mbox{$a_ia_j=a_{j+1}a_i$~ and ~$b_ib_{j+1}=b_{j+1}b_{i+1}$,~ for $1\leqslant i\leqslant j\leqslant n-2$} 
\end{equation}
and 
\begin{equation}\label{eq3}
\mbox{$b_ia_j=a_ib_{j-1}=b_ja_{i+1}=a_jb_i$,~ for $1\leqslant i< j\leqslant n-1$}. 
\end{equation}

Let $A=\{a_{1},\ldots ,a_{n-1}\}$ and $B=\{b_{1},\ldots ,b_{n-1}\}$.  
Consider the alphabet $A\cup B$ with $2n-2$ letters and the set $R$ formed by the following 
$\frac{1}{2}(3n^2-n-2)$ monoid relations:  

\begin{description}

\item $(R_1)$ $a_ia_{n-1}=a_i$,~ $1\leqslant i\leqslant n-1$; 

\item $(R_2)$ $a_ia_j=a_{j+1}a_i$,~ $1\leqslant i\leqslant j\leqslant n-2$; 

\item $(R_3)$ $b_ib_1=b_i$,~ $1\leqslant i\leqslant n-1$; 

\item $(R_4)$ $b_ib_{j+1}=b_{j+1}b_{i+1}$,~ $1\leqslant i\leqslant j\leqslant n-2$; 

\item $(R_5)$ $b_ia_j=a_jb_i$,~ $1\leqslant i< j\leqslant n-1$; 

\item $(R_6)$ $b_ia_1=a_i$,~ $1\leqslant i\leqslant n-1$;  

\item $(R_7)$ $a_ib_{n-1}=b_i$,~ $1\leqslant i\leqslant n-1$. 

\end{description}

Our main aim in this note is to show that $\IO_n$ is defined by the presentation $\langle a_{1},\ldots ,a_{n-1}, b_{1},\ldots ,b_{n-1}\mid R\rangle$. 

\smallskip 

Let $\varphi:(A\cup B)^*\longrightarrow\IO_n$ be the homomorphism of monoids that extends the mapping $A\cup B\longrightarrow\IO_n$ defined by $a_i\mapsto a_i$ and 
$b_i\mapsto b_i$, for $1\<i\<n-1$. 
Therefore, taking into account the above equalities (\ref{eq1})-(\ref{eq3}), 
we can already conclude that the generating set $A\cup B$ of $\IO_n$ satisfies (via $\varphi$) all relations from $R$. 

\section{Decreasing and increasing transformations}

Let us consider the following submonoids of $\T_n$: 
$$
\T_n^-=\{\alpha\in\T_n\mid \mbox{$x\alpha\leqslant x$, for all $x\in\Omega_n$}\} \; (\mbox{order-decreasing transformations of $\Omega_n$})
$$
and 
$$
\T_n^+=\{\alpha\in\T_n\mid \mbox{$x\leqslant x\alpha$, for all $x\in\Omega_n$}\} \; (\mbox{order-increasing transformations of $\Omega_n$}). 
$$

Consider also the mapping $\varphi:\T_n\longrightarrow\T_n$ which maps each transformation $\alpha\in\T_n$ into the transformation $\bar\alpha\in\T_n$ defined by 
$(x)\bar\alpha=n+1-(n+1-x)\alpha$, for all $x\in\Omega_n$. It is a routine matter to check that $\varphi$ is an automorphism of monoids such that 
$\varphi^2$ is the identity mapping of $\T_n$, $\T_n^-\varphi\subseteq\T_n^+$,  $\T_n^+\varphi\subseteq\T_n^-$ and $\IO_n\varphi\subseteq\IO_n$. 
Consequently, 
$$
\T_n^-\varphi=\T_n^+, \quad \T_n^+\varphi=\T_n^- \quad\text{and}\quad \IO_n\varphi=\IO_n. 
$$

Now, consider the submonoids $\IO_n^-=\IO_n\cap\T_n^-$ and $\IO_n^+=\IO_n\cap\T_n^+$ of $\IO_n$. It follows immediately from the previous properties that 
$$
\IO_n^-\varphi=\IO_n^+  \quad\text{and}\quad \IO_n^+\varphi=\IO_n^-. 
$$ 
Therefore, $\IO_n^-$ and $\IO_n^+$ are isomorphic submonoids of $\IO_n$. 

Next, observe that $a_1,\ldots,a_{n-1}\in\IO_n^-$ and $b_1,\ldots,b_{n-1}\in\IO_n^+$. 
Moreover, $\bar a_i=b_{n-i}$ and $\bar b_i=a_{n-i}$, for $1\leqslant i\leqslant n-1$. 

The following property gives a characterization of $\IO_n^-$ that allows us to easily count its number of elements. 

\begin{proposition}\label{char-}
For $n\geqslant1$, $\IO_n^-=\{\alpha\in\IO_n\mid\mbox{\em$\im(\alpha)=\{1,\ldots,k\}$, for some $1\leqslant k\leqslant n$}\}$. 
\end{proposition}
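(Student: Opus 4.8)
The plan is to prove the two set inclusions separately, as the statement is a characterization. Throughout I will repeatedly use that any $\alpha\in\IO_n$ is order-preserving with $\im(\alpha)$ an interval of $\Omega_n$.

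For the inclusion $\subseteq$, let $\alpha\in\IO_n^-$. Since $\alpha$ is order-preserving, $1\alpha\leqslant x\alpha$ for every $x\in\Omega_n$, so $1\alpha=\min\im(\alpha)$; and since $\alpha$ is order-decreasing, $1\alpha\leqslant 1$, whence $1\alpha=1$. As $\im(\alpha)$ is an interval whose minimum is $1$, it must be of the form $\{1,\ldots,k\}$ with $k=\max\im(\alpha)=n\alpha$. This direction is essentially immediate.

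For the inclusion $\supseteq$, which is the substantive direction, let $\alpha\in\IO_n$ with $\im(\alpha)=\{1,\ldots,k\}$ for some $1\leqslant k\leqslant n$; I must show $\alpha$ is order-decreasing. Fix $x\in\Omega_n$ and consider the image of the initial segment $\{1,\ldots,x\}$. The idea is to identify this image exactly and then count. Because $\alpha$ is order-preserving, $\{1,\ldots,x\}\alpha=\{1\alpha,\ldots,x\alpha\}\subseteq\{v\in\im(\alpha)\mid v\leqslant x\alpha\}$. The reverse inclusion also holds: if $v\in\im(\alpha)$ satisfies $v\leqslant x\alpha$ and $y$ is any preimage of $v$ with $y>x$, then order-preservation gives $v=y\alpha\geqslant x\alpha$, forcing $v=x\alpha$, which is already attained within $\{1,\ldots,x\}$. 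Hence $\{1,\ldots,x\}\alpha=\{v\in\im(\alpha)\mid v\leqslant x\alpha\}=\{1,\ldots,x\alpha\}$, using that $\im(\alpha)=\{1,\ldots,k\}$ with $x\alpha\leqslant k$. This set has exactly $x\alpha$ elements, yet it is the image of the $x$-element set $\{1,\ldots,x\}$, so $x\alpha\leqslant x$. As $x$ was arbitrary, $\alpha$ is order-decreasing and therefore $\alpha\in\IO_n^-$.

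Combining the two inclusions yields the claimed equality. I expect the main obstacle to be the reverse inclusion: the delicate point is verifying that $\{1,\ldots,x\}\alpha$ is precisely the full initial segment $\{1,\ldots,x\alpha\}$ rather than merely a subset of it, and this is exactly where the interval hypothesis on $\im(\alpha)$ is indispensable. Once that equality is in hand, the order-decreasing property drops out of a one-line cardinality count.
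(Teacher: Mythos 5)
Your proof is correct, and the substantive direction is argued by a genuinely different route than the paper's. The paper proves $\supseteq$ by induction on $t$: assuming $t\alpha\leqslant t$, it supposes $(t+1)\alpha>t+1$, uses the interval hypothesis to conclude $t+1\in\im(\alpha)$, and then derives a contradiction from order-preservation by examining whether a preimage $x$ of $t+1$ lies below or above $t$. You replace this induction-plus-contradiction with a single global counting step: you establish $\{1,\ldots,x\}\alpha=\{1,\ldots,x\alpha\}$ (the reverse inclusion being exactly where order-preservation and the hypothesis $\im(\alpha)=\{1,\ldots,k\}$ enter), and then $x\alpha\leqslant x$ follows because the image of an $x$-element set has at most $x$ elements. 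Both arguments consume the same two hypotheses --- order-preservation and the image being an initial interval --- but yours is non-inductive and isolates a reusable structural fact (initial segments map onto initial segments of the image), whereas the paper's is more local, only ever comparing the adjacent points $t$ and $t+1$. Two small remarks: for the counting step you only need the inclusion $\{1,\ldots,x\alpha\}\subseteq\{1,\ldots,x\}\alpha$, so the forward inclusion of your set equality, though true, is never used; and your case analysis for the reverse inclusion implicitly splits into ``some preimage of $v$ is $\leqslant x$'' versus ``all preimages exceed $x$'', which is fine but worth stating explicitly.
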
 
\begin{proof}
Let $\alpha\in\IO_n^-$. Then, $1\alpha=1$ and so $\im(\alpha)=\{1,\ldots,k\}$, for some $1\leqslant k\leqslant n$. 

Conversely, let $\alpha\in\IO_n$ be such that $\im(\alpha)=\{1,\ldots,k\}$, for some $1\leqslant k\leqslant n$. 
Then, $1\alpha=\min\im(\alpha)=1$. Assume by induction hypothesis that $t\alpha\leqslant t$, for (a fixed) $1\leqslant t<n$. 
By contradiction, suppose that $(t+1)\alpha>t+1$. Then, $t\alpha\leqslant t<t+1<(t+1)\alpha\leqslant k$, 
whence $t+1\in\im(\alpha)$ and so $t+1=x\alpha$, for some $x\in\Omega_n$. 
If $x\leqslant t$, then $t+1=x\alpha\leqslant t\alpha\leqslant t$, which is a contradiction. 
Hence, $t+1\leqslant x$ and so $t+1<(t+1)\alpha\leqslant x\alpha=t+1$, which is again a contradiction. 
So, $(t+1)\alpha\leqslant t+1$. Thus, $t\alpha\leqslant t$, for all $t\in\Omega_n$, and so $\alpha\in\IO_n^-$, as required. 
\end{proof}

Of course, from the last proposition and the equality $\IO_n^+=\IO_n^-\varphi$, it follows that 
$$
\IO_n^+=\{\alpha\in\IO_n\mid\mbox{$\im(\alpha)=\{k,\ldots,n\}$, for some $1\leqslant k\leqslant n$}\}. 
$$

\begin{proposition}\label{size-}
For $n\geqslant1$, $|\IO_n^-|=|\IO_n^+|=2^{n-1}$. 
\end{proposition}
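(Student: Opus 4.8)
The plan is to reduce the computation to $\IO_n^-$ and then apply the characterization from Proposition~\ref{char-}. Since $\IO_n^+=\IO_n^-\varphi$ and $\varphi$ restricts to a bijection (in fact an isomorphism) from $\IO_n^-$ onto $\IO_n^+$, the two monoids have the same cardinality; hence it suffices to show that $|\IO_n^-|=2^{n-1}$.

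By Proposition~\ref{char-}, an element of $\IO_n^-$ is exactly an order-preserving transformation $\alpha$ of $\Omega_n$ whose image is $\{1,\ldots,k\}$ for some $1\leqslant k\leqslant n$. I would partition $\IO_n^-$ according to the value $k=|\im(\alpha)|$ and count each block separately. For a fixed $k$, the relevant transformations are precisely the order-preserving surjections $\Omega_n\to\{1,\ldots,k\}$, which I would count by the standard argument: such a surjection has nonempty fibers $\alpha^{-1}(1),\ldots,\alpha^{-1}(k)$ that, by order-preservation, are consecutive intervals with $\alpha^{-1}(1)<\cdots<\alpha^{-1}(k)$, so $\alpha$ is equivalent to an ordered partition of $\{1,\ldots,n\}$ into $k$ nonempty consecutive blocks, i.e.\ a composition of $n$ into $k$ positive parts, of which there are $\binom{n-1}{k-1}$. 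Summing over $k$ then gives $|\IO_n^-|=\sum_{k=1}^{n}\binom{n-1}{k-1}=\sum_{j=0}^{n-1}\binom{n-1}{j}=2^{n-1}$, as required.

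Alternatively, the same count can be read directly off the formula $|\IO_n|=\sum_{k=1}^{n}(n-k+1)\binom{n-1}{k-1}$ recalled at the start of Section~\ref{pre}: there the coefficient $\binom{n-1}{k-1}$ counts the order-preserving maps onto a fixed interval of length $k$, while the factor $n-k+1$ counts the intervals of length $k$ in $\Omega_n$. Restricting to $\IO_n^-$ singles out the unique interval $\{1,\ldots,k\}$ of each length, replacing $n-k+1$ by $1$ and yielding $\sum_{k=1}^{n}\binom{n-1}{k-1}=2^{n-1}$. I expect no genuine obstacle here; the only step needing care is the elementary bijection between order-preserving surjections onto a $k$-chain and compositions of $n$ into $k$ parts, and this is routine.
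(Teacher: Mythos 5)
Your proposal is correct and follows essentially the same route as the paper: reduce to $\IO_n^-$ via the bijection $\varphi$, use Proposition~\ref{char-} to partition $\IO_n^-$ by image size $k$, count $\binom{n-1}{k-1}$ transformations for each $k$, and sum to get $2^{n-1}$. The only difference is cosmetic: where the paper obtains the count $\binom{n-1}{k-1}$ by adapting the proof of a cited result from \cite{Fernandes&Paulista:2023}, you spell out the underlying standard argument (order-preserving surjections onto a $k$-chain correspond to compositions of $n$ into $k$ parts), which is exactly what that adaptation amounts to.
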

\begin{proof}
Let $1\leqslant k\leqslant n$. 
In view of Proposition \ref{char-}, we have 
$$
\{\alpha\in\IO_n^-\mid |\im(\alpha)|=k\}=\{\alpha\in\IO_n^-\mid \im(\alpha)=\{1,\ldots,k\}\}. 
$$
Then, for $\IO_n^-$, we can adapt the proof of \cite[Theorem 1.5]{Fernandes&Paulista:2023} for $\IO_n$, taking into account that instead of $n-k+1$ possibilities for images with $k$ elements, we only have one possibility. 
In this way, we get 
$$
|\{\alpha\in\IO_n^-\mid |\im(\alpha)|=k\}=\binom{n-1}{k-1}. 
$$

Thus, $|\IO_n^-|=\sum_{k=1}^{n}\binom{n-1}{k-1}=\sum_{k=0}^{n}\binom{n-1}{k}=2^{n-1}$,
as required. 
\end{proof}

\smallskip 

Now, observe that $\IO_1^-=\IO_1=\T_1$ and 
$\IO_2^-=\left\{\left(\begin{smallmatrix} 1 &2 \\ 1 &1 \\ \end{smallmatrix}\right),\left(\begin{smallmatrix} 1 &2 \\ 1 &2 \\ \end{smallmatrix}\right)\right\}$. 
Then, clearly, $\IO_1^-$ has rank $0$ and $\IO_2^-$ has rank $1$. 
More generally, in what follows,  for $n\geqslant1$, we aim to show that $A$ is a generating set with minimum size of the monoid $\IO_n^-$.  

First, notice that $\{\alpha\in\IO_n^-\mid |\im(\alpha)|=n-1\}=A$ and, of course, like for $\IO_n$, the group of units of $\IO_n^-$ is trivial. 
On the other hand, we have: 

\begin{lemma}\label{ger-}
For $n\geqslant3$, let $\alpha\in\IO_n^-$ be such that $|\im(\alpha)|=k$, for some $1\leqslant k\leqslant n-2$. 
Then, there exist $\gamma_1,\gamma_2\in\IO_n^-$ such that $|\im(\gamma_1)|=|\im(\gamma_2)|=k+1$ and $\alpha=\gamma_1\gamma_2$. 
\end{lemma}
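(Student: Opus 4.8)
The plan is to lean on Proposition \ref{char-}: a transformation belongs to $\IO_n^-$ exactly when it is order-preserving and its image is an initial interval $\{1,\ldots,m\}$. Thus, to certify that the two factors I build lie in $\IO_n^-$, it suffices to check that they are order-preserving and to identify their images; order-decreasingness is then automatic. I would write $\alpha=\left(\begin{smallmatrix}1&\cdots&n\\ c_1&\cdots&c_n\end{smallmatrix}\right)$, so that $c_1\leqslant\cdots\leqslant c_n$ is non-decreasing with $\{c_1,\ldots,c_n\}=\{1,\ldots,k\}$. Since $k\leqslant n-2<n$, the sequence has fewer distinct values than entries, so $c_i=c_{i+1}$ for some $1\leqslant i\leqslant n-1$; put $p=c_i$.

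The factorisation realises the slogan \emph{first refine the kernel, then merge}. I would let $\gamma_1$ split the repeated value by setting $x\gamma_1=c_x$ for $x\leqslant i$ and $x\gamma_1=c_x+1$ for $x\geqslant i+1$. Monotonicity is immediate on each of the two ranges and holds at the junction because $i\gamma_1=p<p+1=(i+1)\gamma_1$. Moreover, since the image $\{1,\ldots,k\}$ of $\alpha$ is an interval, the values $c_x$ with $x\leqslant i$ exhaust $\{1,\ldots,p\}$ while those $c_x$ with $x\geqslant i+1$ exhaust $\{p,\ldots,k\}$; hence the values $x\gamma_1=c_x+1$ for $x\geqslant i+1$ exhaust $\{p+1,\ldots,k+1\}$, and $\im(\gamma_1)=\{1,\ldots,k+1\}$. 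By Proposition \ref{char-}, $\gamma_1\in\IO_n^-$ with $|\im(\gamma_1)|=k+1$. I would then let $\gamma_2$ undo the shift: $x\gamma_2=x$ for $1\leqslant x\leqslant p$, $x\gamma_2=x-1$ for $p+1\leqslant x\leqslant k+1$, and $x\gamma_2=k+1$ for $k+2\leqslant x\leqslant n$. Again $\gamma_2$ is order-preserving with $\im(\gamma_2)=\{1,\ldots,k+1\}$, so $\gamma_2\in\IO_n^-$ with $|\im(\gamma_2)|=k+1$.

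It then remains to verify $\alpha=\gamma_1\gamma_2$ by evaluating $x(\gamma_1\gamma_2)=(x\gamma_1)\gamma_2$ pointwise. For $x\leqslant i$ one has $x\gamma_1=c_x\leqslant p$, so $(x\gamma_1)\gamma_2=c_x$; for $x\geqslant i+1$ one has $x\gamma_1=c_x+1$ with $p+1\leqslant c_x+1\leqslant k+1$, so $(x\gamma_1)\gamma_2=c_x$. In either case $x(\gamma_1\gamma_2)=c_x=x\alpha$, as wanted.

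The points that require care are purely about getting the image \emph{sizes} to equal $k+1$ rather than $k$. The crucial one is that $\gamma_2$ must actually attain the value $k+1$, which by order-preservation can only happen at some $x\geqslant k+2$; this is precisely where the hypothesis $k\leqslant n-2$ enters, since it guarantees $k+2\leqslant n$ and hence that the third clause defining $\gamma_2$ is nonempty. The companion bookkeeping for $\gamma_1$ — that the shifted and unshifted parts together cover exactly $\{1,\ldots,k+1\}$ — relies on the image of $\alpha$ being an interval, which is built into the definition of $\IO_n^-$. I expect these size checks to be the only real obstacle; the monotonicity and the pointwise composition are routine.
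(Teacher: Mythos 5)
Your proof is correct, and it takes a genuinely different route from the paper's, at least in presentation. The paper does not construct anything: its entire proof is a one-line reference to the proof of Lemma 3.2 of \cite{Fernandes&Paulista:2023}, which establishes the analogous decomposition for arbitrary elements of $\IO_n$ (whose images are intervals of the form $\{j+1,\ldots,j+k\}$), together with the observation that for an order-decreasing $\alpha$ one must have $j=0$, so the factors $\gamma_1,\gamma_2$ built there automatically lie in $\IO_n^-$. You instead rebuild an explicit factorization from scratch: pick a repeated consecutive value $c_i=c_{i+1}=p$ (which exists since $k\leqslant n-2<n$), let $\gamma_1$ split it by shifting the tail of $\alpha$ up by one, and let $\gamma_2$ undo the shift while attaining the extra value $k+1$ on the points $\geqslant k+2$; Proposition~\ref{char-} then certifies membership in $\IO_n^-$, and the hypothesis $k\leqslant n-2$ enters exactly where you say it does (to make the third clause of $\gamma_2$ nonempty, so that $|\im(\gamma_2)|=k+1$ rather than $k$). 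Your bookkeeping is sound: the interval structure of $\im(\alpha)$ plus monotonicity does give that $\{c_1,\ldots,c_i\}=\{1,\ldots,p\}$ and $\{c_{i+1},\ldots,c_n\}=\{p,\ldots,k\}$, and the pointwise check of $\alpha=\gamma_1\gamma_2$ goes through. What your route buys is self-containedness: the lemma is verified without consulting the external reference, and the role of each hypothesis is visible. What the paper's route buys is brevity and the conceptual point that the general construction for $\IO_n$ restricts without change to the order-decreasing case; the price is that a reader must have the cited proof at hand to check it.
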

\begin{proof}
It suffices to observe that in the proof of \cite[Lemma 3.2]{Fernandes&Paulista:2023}, for an element $\alpha\in\IO_n^-$, we must have $j=0$ 
and thus the transformations $\gamma_1$ and $\gamma_2$ there defined also belong to $\IO_n^-$. 
\end{proof}

Therefore, by a simple inductive reasoning, we can conclude that $\IO_n^-$ is generated by $A$. 
In addition, by observing that $a_1,\ldots,a_{n-1}$ are the only elements of $\IO_n^-$ with images of size $n-1$ and their kernels are pairwise distinct, it is immediate to deduce that $a_1,\ldots,a_{n-1}$ are undecomposable elements (i.e. they are not a product of two elements distinct from themselves) of $\IO_n^-$, and so they must belong to any generating set of $\IO_n^-$. Thus, we get: 

\begin{theorem}\label{rank-}
For $n\geqslant1$, $A$ is a generating set with minimum size of the monoid $\IO_n^-$. 
In particular, the monoid $\IO_n^-$ has rank $n-1$. 
\end{theorem}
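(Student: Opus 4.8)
The plan is to establish Theorem \ref{rank-} in two parts, matching the two assertions: first that $A$ generates $\IO_n^-$, and second that no smaller generating set exists. For the generation claim, I would proceed by downward induction on the image size. The key observation is that $\{\alpha\in\IO_n^-\mid |\im(\alpha)|=n-1\}=A$, as already noted, so every element of $\IO_n^-$ of corank $1$ is literally one of the $a_i$. For an arbitrary $\alpha\in\IO_n^-$ with $|\im(\alpha)|=k\leqslant n-2$, Lemma \ref{ger-} supplies a factorization $\alpha=\gamma_1\gamma_2$ with $\gamma_1,\gamma_2\in\IO_n^-$ and $|\im(\gamma_i)|=k+1$. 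Iterating this, any $\alpha$ with small image decomposes into a product of elements whose images grow by one at each stage until they reach size $n-1$, at which point each factor lies in $A$. Hence every element of $\IO_n^-$ is a product of elements of $A$, so $A$ generates.

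For the minimality, I would argue that each generator is forced to appear in any generating set. The standard tool here is the notion of an \emph{undecomposable} element: an element $\alpha$ that cannot be written as a product $\beta\gamma$ with $\beta,\gamma\neq\alpha$. Any undecomposable element of a monoid must lie in every generating set, since if it equalled a product of other generators, those factors would have to be distinct from $\alpha$ (the group of units being trivial rules out the degenerate case where a factor equals $\alpha$ times a unit). The transformations $a_1,\ldots,a_{n-1}$ are exactly the elements of image size $n-1$, and any nontrivial product $\beta\gamma$ in $\IO_n^-$ satisfies $|\im(\beta\gamma)|\leqslant\min\{|\im(\beta)|,|\im(\gamma)|\}$; to obtain image size $n-1$ from such a product one would need both factors to have image size $n-1$ as well, forcing the product to be a product of two elements of $A$, and since the $a_i$ have pairwise distinct kernels one checks that no $a_i$ arises as such a product with both factors distinct from $a_i$ itself. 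Therefore each $a_i$ is undecomposable and belongs to every generating set, giving the lower bound $n-1$ on the rank.

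Combining the two directions, $A$ is a generating set of size $n-1$ that is contained in every generating set, so it is of minimum size and the rank of $\IO_n^-$ is exactly $n-1$. The small cases $n=1$ and $n=2$ are handled separately by direct inspection (as already recorded in the text, $\IO_1^-$ has rank $0$ and $\IO_2^-$ has rank $1$, consistent with $|A|=n-1$).

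I expect the main obstacle to be the undecomposability verification: one must rule out that any $a_i$ can be written as a product of two elements of $\IO_n^-$ both different from $a_i$. The cleanest route is the kernel argument sketched above, exploiting that a product in a transformation monoid refines kernels and cannot strictly increase image size, so that the only candidate factorizations keeping image size $n-1$ are products of two corank-$1$ elements, and these are then excluded by comparing kernels directly. Since the bulk of the real work is already delegated to Lemma \ref{ger-} and to the structural facts recalled just before the theorem, the remaining argument is short and largely combinatorial.
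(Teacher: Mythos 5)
Your proposal is correct and follows essentially the same route as the paper: generation of $\IO_n^-$ by $A$ via downward induction on image size using Lemma \ref{ger-} (with the small cases $n=1,2$ checked directly), and minimality by showing each $a_i$ is undecomposable, combining the bound $|\im(\beta\gamma)|\leqslant\min\{|\im(\beta)|,|\im(\gamma)|\}$, the fact that $A$ is exactly the set of elements of image size $n-1$, triviality of the group of units, and the pairwise distinct kernels of the $a_i$. No gaps to report.
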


Since $\IO_n^+=\IO_n^-\varphi$ and $\{b_1,\ldots,b_{n-1}\}=\{a_1,\ldots,a_{n-1}\}\varphi$, 
we also get that $B$ is a generating set with minimum size of the monoid $\IO_n^+$. 

\medskip 

We finish this section by exhibiting a presentation for $\IO_n^-$. 

Let $R^-$ be the set of $\frac{1}{2}(n^2-n)$ relations $R_1\cup R_2$, considered on the alphabet $A$. 
Let $\varphi^-:A^*\longrightarrow\IO_n$ be the homomorphism of monoids that extends the mapping $A\longrightarrow\IO_n^-$ defined by $a_i\mapsto a_i$, for $1\<i\<n-1$. 
Clearly, all relations from $R^-$ are satisfied (via $\varphi^-$) by the generators $A$ of $\IO_n^-$. 

Let 
$$
W^-=\{a_{i_1}\cdots a_{i_k}\in A^*\mid \mbox{$0\leqslant k\leqslant n-1$ and $1\leqslant i_k<\cdots<i_1\leqslant n-1$}\}. 
$$
It is clear that $|W^-|=2^{n-1}$ (as there is an obvious bijection between $W^-$ and the power set of $\{1,\ldots,n-1\}$) and $W^-$ contains the empty word (taking $k=0$) and $A$. Moreover, we have: 

\begin{lemma}\label{can-}
Let $w\in A^*$. Then, there exists a word $w'\in W^-$ such that $|w'|\<|w|$ and $w\sim_{R^-}w'$. 
\end{lemma}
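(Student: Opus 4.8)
The plan is to exploit the two families of relations for complementary roles: the relations $(R_2)$ preserve length and let one \emph{sort} the indices of a word into strictly decreasing order, whereas the relations $(R_1)$ strictly decrease length and let one \emph{delete} superfluous letters. First I would reduce the statement to a one-letter insertion problem. Arguing by induction on $|w|$, I write $w=w''a_k$ with $|w''|=|w|-1$; the induction hypothesis yields $v\in W^-$ with $|v|\<|w''|$ and $w''\sim_{R^-}v$, so $w\sim_{R^-}va_k$. It then suffices to establish the following insertion claim: for every $v=a_{j_1}\cdots a_{j_l}\in W^-$, where $n-1\>j_1>\cdots>j_l\>1$, and every $1\<k\<n-1$, there is $v'\in W^-$ with $|v'|\<l+1$, $va_k\sim_{R^-}v'$, and whose least index is not smaller than the least index occurring in $va_k$. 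Granting the claim, $w\sim_{R^-}v'\in W^-$ with $|v'|\<|v|+1\<|w|$, which is exactly what is wanted.

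I would prove the insertion claim by induction on $l$. If $l=0$ or $k<j_l$, then $va_k$ is already strictly decreasing, hence lies in $W^-$, and there is nothing to prove. If $k=n-1$, then $(R_1)$ applied to the suffix gives $a_{j_l}a_{n-1}=a_{j_l}$, whence $va_k\sim_{R^-}v\in W^-$. The essential case is $j_l\<k\<n-2$: here $(R_2)$ applies to the suffix, yielding $a_{j_l}a_k=a_{k+1}a_{j_l}$, so that $va_k\sim_{R^-}(a_{j_1}\cdots a_{j_{l-1}}a_{k+1})a_{j_l}$. Since $a_{j_1}\cdots a_{j_{l-1}}\in W^-$ has length $l-1<l$, the induction hypothesis, applied to the insertion of $a_{k+1}$, furnishes $u\in W^-$ with $|u|\<l$ and $a_{j_1}\cdots a_{j_{l-1}}a_{k+1}\sim_{R^-}u$, and I would set $v'=ua_{j_l}$. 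Termination is then immediate, since the word receiving the insertion strictly shortens even though the travelling index is incremented $k\mapsto k+1$.

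The hard part will be verifying that this $v'=ua_{j_l}$ really belongs to $W^-$, i.e. that its indices stay strictly decreasing; this is exactly why the ``least index'' clause is carried in the claim. The key observation is that, used left-to-right as above, neither $(R_1)$ nor $(R_2)$ can ever introduce an index smaller than the smallest one already present: $(R_1)$ merely deletes a letter, while $(R_2)$ replaces the index multiset $\{i,j\}$ by $\{i,j+1\}$ with $i\<j$, leaving the minimum unchanged. Hence the least index of $u$ is at least the least index occurring in $a_{j_1}\cdots a_{j_{l-1}}a_{k+1}$, namely $\min\{j_{l-1},k+1\}$, which exceeds $j_l$ because $j_{l-1}>j_l$ and $k+1>j_l$; therefore appending $a_{j_l}$ keeps the sequence strictly decreasing, so $v'\in W^-$ with least index $j_l=\min\{j_l,k\}$, closing the induction. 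The remaining length bookkeeping is routine: every case outputs a word of length at most $l+1$, and combined with the outer induction this gives $|w'|\<|w|$. I expect this invariant, rather than any single computation, to be the real obstacle, since it is what simultaneously guarantees that the insertions terminate and that the strictly decreasing normal form is maintained throughout.
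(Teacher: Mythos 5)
Your proposal is correct and follows essentially the same route as the paper: induction on $|w|$, reducing to inserting the last letter into an already sorted word, then bubbling that letter leftward via $(R_2)$ (incrementing its index at each swap) until it either slots into place or reaches index $n-1$ and is deleted by $(R_1)$. The only difference is organizational -- your inner induction with the least-index invariant replaces the paper's explicit computation with a maximal shift parameter $t$, but the rewriting steps are identical.
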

\begin{proof}
Let us denote $\sim_{R^-}$ simply by $\sim$. 

If $|w|=1$, then $w\in W^-$. 

Next, suppose that $w=a_{j_1}a_{j_2}$, with $1\leqslant j_1,j_2\leqslant n-1$. 
If $j_2<j_1$, then $w\in W^-$. So, suppose that $j_1\leqslant j_2$. 
If $j_2=n-1$, then $w=a_{j_1}a_{n-1}\sim a_{j_1}\in W^-$. 
On the other hand, if $j_2\leqslant n-2$, then $w=a_{j_1}a_{j_2}\sim a_{j_2+1}a_{j_1}\in W^-$, since $j_1<j_2+1$. 

Now, assume by induction hypothesis that the property is valid for words $w\in A^*$ of length less than or equal to $\ell$, for a given (fixed) $\ell\>2$. 

Let $w=a_{j_1}\cdots a_{j_\ell}a_{j_{\ell+1}}$, for some $1\leqslant j_1,\ldots,j_{\ell+1}\leqslant n-1$. 
Then, by induction hypothesis, $w\sim a_{i_1}\cdots a_{i_k}a_{j_{\ell+1}}$, with $1\<k\<\ell$ and $1\leqslant i_k<\cdots<i_1\leqslant n-1$. 

If $j_{\ell+1}=n-1$, then $w\sim a_{i_1}\cdots a_{i_k}\in W^-$. If $j_{\ell+1}<i_k$, then $w\sim a_{i_1}\cdots a_{i_k}a_{j_{\ell+1}}\in W^-$. 
So, suppose that $i_k\<j_{\ell+1} <n-1$. 
Hence, we have 
$$
w\sim a_{i_1}\cdots a_{i_k}a_{j_{\ell+1}}\sim a_{i_1}\cdots a_{i_{k-1}} a_{j_{\ell+1}+1}a_{i_k} 
\sim a_{i_1}\cdots a_{i_{k-t}} a_{j_{\ell+1}+t}a_{i_{k-t+1}}\cdots a_{i_k},
$$ 
for all $1\<t\<k$ such that $i_{k-t+1}\<j_{\ell+1}+t-1\<n-2$. 
Let 
$
t=\max\{1\<t\<k\mid i_{k-t+1}\<j_{\ell+1}+t-1\<n-2\}.
$
If $t=k$ (i.e. $i_{1}\<j_{\ell+1}+k-1\<n-2$), then $w\sim a_{j_{\ell+1}+k}a_{i_{1}}\cdots a_{i_k} \in W^-$, since $j_{\ell+1}+k>i_1$. 
Therefore, suppose that $t<k$. In this case, we have $j_{\ell+1}+t<i_{k-t}$ or $j_{\ell+1}+t=n-1$. 
If $j_{\ell+1}+t=n-1$, then $w\sim a_{i_1}\cdots a_{i_{k-t}} a_{i_{k-t+1}}\cdots a_{i_k}\in W^-$. 
On the other hand, if $j_{\ell+1}+t<i_{k-t}$, then 
$w\sim a_{i_1}\cdots a_{i_{k-t}} a_{j_{\ell+1}+t}a_{i_{k-t+1}}\cdots a_{i_k}\in W^-$, 
since $1\leqslant i_k<\cdots<  i_{k-t+1} < j_{\ell+1}+t<i_{k-t} <\cdots < i_1\leqslant n-1$. 

We finish this proof by observing that, in each case, we obtained $w\sim w'\in W^-$ with $|w'|\<k+1\<\ell+1=|w|$, 
as required. 
\end{proof} 

At this moment, we gather all the ingredients to, given Proposition \ref{ruskuc}, conclude:

\begin{theorem}\label{pres-}
For $n\>1$, the monoid $\IO_n^-$ is defined by the presentation $\langle A\mid R^-\rangle$ on $n-1$ generators and  $\frac{1}{2}(n^2-n)$ relations. 
\end{theorem}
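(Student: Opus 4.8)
The plan is to apply the Guess and Prove method (Proposition \ref{ruskuc}) with $M=\IO_n^-$, generating set $X=A$, alphabet $A$, the relation set $R^-=R_1\cup R_2$, and the candidate set of normal forms $W=W^-$. All the machinery has essentially been prepared, so the proof amounts to verifying the three hypotheses of Proposition \ref{ruskuc} in turn, after first disposing of the small cases $n=1$ and $n=2$ (where $\IO_n^-$ is trivial or monogenic and the presentation is immediate, with $R^-=\emptyset$ when $n\leqslant 2$).

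First I would check condition (1): the generating set $A$ of $\IO_n^-$ must satisfy, via $\varphi^-$, all relations in $R^-$. This is exactly the assertion already recorded in the paragraph defining $R^-$, which follows from the equalities (\ref{eq1}) and (\ref{eq2}) specialized to the $a_i$'s (namely $a_ia_{n-1}=a_i$ and $a_ia_j=a_{j+1}a_i$); so this step is free. Condition (2) is the genuine combinatorial content, but it too is already established: by Lemma \ref{can-}, for every word $w\in A^*$ there is a word $w'\in W^-$ with $w\sim_{R^-}w'$, and in particular this holds for every $w\in X^*=A^*$. Finally, condition (3) requires $|W^-|\leqslant|\IO_n^-|$; by the remark following the definition of $W^-$ we have $|W^-|=2^{n-1}$, and by Proposition \ref{size-} we have $|\IO_n^-|=2^{n-1}$, so in fact $|W^-|=|\IO_n^-|$ and the inequality holds with equality.

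With the three conditions verified, Proposition \ref{ruskuc} yields directly that $\IO_n^-$ is defined by the presentation $\langle A\mid R^-\rangle$. The count of generators is $|A|=n-1$ and the count of relations is $|R^-|=|R_1|+|R_2|=(n-1)+\binom{n-1}{2}=\frac{1}{2}(n^2-n)$, matching the statement. I do not anticipate a real obstacle here, since the theorem is precisely the point where the earlier lemmas are assembled; the only genuine work — showing that the rewriting in $R^-$ drives every word into the strictly-decreasing normal form $W^-$ — was carried out in Lemma \ref{can-}, and the only potential subtlety is making sure $W^-$ and $\IO_n^-$ have the \emph{same} size so that the normal forms are forced to be distinct as elements of the monoid, which is supplied by Proposition \ref{size-} together with the closing remark that conditions (1)--(3) in fact force $|W|=|M|$.
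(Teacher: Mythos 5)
Your proposal is correct and follows essentially the same route as the paper: the theorem is obtained exactly by feeding the satisfaction of the relations $R^-$ (via $\varphi^-$), Lemma \ref{can-}, and the count $|W^-|=2^{n-1}=|\IO_n^-|$ from Proposition \ref{size-} into Proposition \ref{ruskuc}. One small correction: your parenthetical claim that $R^-=\emptyset$ for $n=2$ is false --- for $n=2$ the set $R_1$ consists of the relation $a_1a_1=a_1$, which is genuinely needed (otherwise the presentation would define the infinite free monogenic monoid rather than the two-element monoid $\IO_2^-$); in any case no separate treatment of $n\leqslant 2$ is required, since the uniform argument applies verbatim to all $n\geqslant 1$.
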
 

Since $\IO_n^+=\IO_n^-\varphi$ and $\bar a_i=b_{n-i}$ and $\bar b_i=a_{n-i}$, for $1\leqslant i\leqslant n-1$, being $R^+=R_3\cup R_4$, 
we can also conclude that the monoid $\IO_n^+$ is defined by the presentation $\langle B\mid R^+\rangle$ 
on $n-1$ generators and  $\frac{1}{2}(n^2-n)$ relations. 

\smallskip 

For reference below, observe also that, a property for $\langle B\mid R^+\rangle$ analogous to Lemma \ref{can-} is also valid: 
let 
$$
W^+=\{b_{j_1}\cdots b_{j_\ell}\in B^*\mid \mbox{$0\leqslant \ell\leqslant n-1$ and $1\leqslant j_1<\cdots<j_\ell\leqslant n-1$}\}. 
$$
Then, we have: 

\begin{lemma}\label{can+}
Let $w\in B^*$. Then, there exists a word $w'\in W^+$ such that $|w'|\<|w|$ and $w\sim_{R^+}w'$. 
\end{lemma}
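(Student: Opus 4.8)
The plan is to deduce Lemma~\ref{can+} from Lemma~\ref{can-} by transporting it across the duality induced by the automorphism $\varphi$ of $\T_n$, rather than redoing the induction. To this end I would introduce the isomorphism of free monoids $\Phi\colon B^*\longrightarrow A^*$ determined on letters by $b_i\mapsto a_{n-i}$, $1\leqslant i\leqslant n-1$. Since this is a length-preserving bijection on the alphabets, $\Phi$ satisfies $|\Phi(w)|=|w|$ for every word $w$, and it reflects at word level the equalities $\bar b_i=a_{n-i}$ that govern $\varphi$ on the generators.

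The crucial step is to verify that $\Phi$ carries the defining relations $R^+=R_3\cup R_4$ onto $R^-=R_1\cup R_2$, so that $u\sim_{R^+}v$ holds if and only if $\Phi(u)\sim_{R^-}\Phi(v)$. A relation $b_ib_1=b_i$ of $R_3$ is sent to $a_{n-i}a_{n-1}=a_{n-i}$, and as $i$ runs through $1,\ldots,n-1$ so does $n-i$, giving exactly $R_1$. A relation $b_ib_{j+1}=b_{j+1}b_{i+1}$ of $R_4$ is sent to $a_{n-i}a_{n-j-1}=a_{n-j-1}a_{n-i-1}$; writing $s=n-i$ and $p=n-j-1$ this becomes $a_sa_p=a_pa_{s-1}$ with $1\leqslant p<s\leqslant n-1$, which is precisely $R_2$ in the equivalent form obtained from $a_pa_q=a_{q+1}a_p$ by setting $q=s-1$. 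As a relation and its reverse generate the same congruence, this yields $\Phi(R^+)=R^-$. I would also record that $\Phi$ maps $W^+$ onto $W^-$: an increasing word $b_{j_1}\cdots b_{j_\ell}$ goes to $a_{n-j_1}\cdots a_{n-j_\ell}$, whose indices decrease, which is the defining shape of $W^-$.

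The conclusion is then immediate. Given $w\in B^*$, apply Lemma~\ref{can-} to $\Phi(w)$ to obtain $v\in W^-$ with $|v|\leqslant|\Phi(w)|$ and $\Phi(w)\sim_{R^-}v$, and set $w'=\Phi^{-1}(v)\in W^+$; then $w\sim_{R^+}w'$ and $|w'|=|v|\leqslant|\Phi(w)|=|w|$. I expect the only point needing care to be the bookkeeping in the $R_4\mapsto R_2$ check, namely matching the shifts $i\mapsto n-i$, $j+1\mapsto n-j-1$, $i+1\mapsto n-i-1$ against the range $1\leqslant i\leqslant j\leqslant n-2$ and confirming that the resulting constraints coincide exactly with those defining $R_2$; everything else is formal. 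Should one prefer to avoid the duality altogether, the same statement can instead be proved by rerunning the induction of Lemma~\ref{can-} verbatim, now pushing the largest indices to the right by means of $R_3$ and $R_4$ to reach the strictly increasing canonical form of $W^+$.
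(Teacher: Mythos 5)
Your proof is correct and coincides with the paper's own (implicit) argument: the paper states Lemma~\ref{can+} without proof, justifying it by the very duality $b_i\leftrightarrow a_{n-i}$ arising from the automorphism $\varphi$, which you have simply made explicit at the free-monoid level, including the checks that $\Phi$ carries $R_3$ onto $R_1$ and $R_4$ onto the reversed pairs of $R_2$ (hence $\sim_{R^+}$ corresponds to $\sim_{R^-}$) and that $\Phi(W^+)=W^-$. Your index bookkeeping for the $R_4\mapsto R_2$ step is accurate, so the transport of Lemma~\ref{can-} goes through exactly as you describe.
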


\section{Main result}

In this section, we prove that the presentation $\langle A\cup B\mid R \rangle$ defines the monoid $\IO_n$ and, by means of substitutions (\textit{Titze transformations}),  
we also deduce another presentation for $\IO_n$ in terms of the minimal generators $a_{1},\ldots ,a_{n-2},b_{n-1}$. 

First, we set forth a series of lemmas. 

\begin{lemma}\label{R5'}
For $2\<j\<i\<n-1$, $b_ia_j\sim_R a_{j-1}b_{i-1}$. 
\end{lemma}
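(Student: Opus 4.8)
The plan is to produce an explicit chain of elementary rewritings, each a single application of one relation from $R$, transforming the word $b_ia_j$ into $a_{j-1}b_{i-1}$; no induction should be needed. Before doing so I would first confirm, from the multiplication formulas recorded in Section \ref{pre}, that under the hypothesis $2\<j\<i\<n-1$ the two sides really do represent the same element of $\IO_n$: both equal the transformation sending $x\mapsto x+1$ for $x\<j-1$, fixing $x$ for $j\<x\<i$, and sending $x\mapsto x-1$ for $x\>i+1$. This sanity check guarantees that such a chain exists and pins down the target word.

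I would split the derivation into two phases. In the first phase I rewrite $b_ia_j$ into $a_ib_{j-1}$. The key move is to use $R_6$ to replace the trailing $a_j$ by $b_ja_1$, which exposes the product $b_ib_j$; since $j\<i$ (so $j-1<i$), a single application of $R_4$ reorders this to $b_{j-1}b_i$, and this is precisely where the hypothesis $j\<i$ is consumed, letting the two indices ``cross''. A second use of $R_6$ then folds $b_ia_1$ back into $a_i$, and finally, because $j-1<i$, relation $R_5$ commutes $b_{j-1}$ past $a_i$, yielding $a_ib_{j-1}$.

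In the second phase I rewrite $a_ib_{j-1}$ into $a_{j-1}b_{i-1}$ using only the $a$-relations together with $R_7$: expand $b_{j-1}$ to $a_{j-1}b_{n-1}$ via $R_7$, swap the adjacent pair $a_ia_{j-1}$ to $a_{j-1}a_{i-1}$ via $R_2$ (legitimate since $j-1\<i-1\<n-2$), and contract $a_{i-1}b_{n-1}$ back to $b_{i-1}$ by $R_7$. Concatenating the two phases gives $b_ia_j\sim_R a_{j-1}b_{i-1}$. The only genuine subtlety is the index bookkeeping needed to certify that every invoked relation is actually applicable; in particular the step $b_ib_j\sim_R b_{j-1}b_i$ (an identity within $\IO_n^+$, hence also derivable from the presentation $\langle B\mid R^+\rangle$) is the one that uses $j\<i$, and I regard checking its applicability, together with the endpoint conditions $j-1\>1$ and $i-1\<n-2$, as the main point to get right.
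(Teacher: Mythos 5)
Your proposal is correct and follows essentially the same route as the paper: the chain you describe, $b_ia_j\sim_R b_ib_ja_1\sim_R b_{j-1}b_ia_1\sim_R b_{j-1}a_i\sim_R a_ib_{j-1}\sim_R a_ia_{j-1}b_{n-1}\sim_R a_{j-1}a_{i-1}b_{n-1}\sim_R a_{j-1}b_{i-1}$, is exactly the paper's derivation, applying $R_6$, $R_4$, $R_6$, $R_5$, $R_7$, $R_2$, $R_7$ in the same order with the same index bookkeeping. Your two-phase framing and the preliminary semantic check against the multiplication formulas are harmless additions.
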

\begin{proof} 
Let us denote $\sim_R$ simply by $\sim$. 
Then, we have 
$$
b_ia_j\sim b_ib_ja_1\sim b_{j-1}b_ia_1\sim b_{j-1}a_i\sim a_ib_{j-1} \sim a_ia_{j-1}b_{n-1}\sim a_{j-1}a_{i-1}b_{n-1}\sim a_{j-1}b_{i-1}, 
$$
as required. 
\end{proof}

\begin{lemma}\label{biu}
Let $u\in A^*$ and $1\<i\<n-1$. Then, there exist $v\in A^*$ and $1\<k\<i\<n-1$ such that $b_iu\sim_R v$ or $b_iu\sim_R vb_k$. 
\end{lemma}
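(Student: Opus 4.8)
The plan is to prove the statement by induction on the length $|u|$ of the word $u\in A^*$, using the observation that the relations $(R_5)$, $(R_6)$ and Lemma \ref{R5'} describe precisely what happens when the single letter $b_i$ is pushed to the right past a single letter $a_j$, according to whether $j>i$, $j=1$, or $2\leqslant j\leqslant i$. Denote $\sim_R$ simply by $\sim$. For the base case $u=1$ one writes $b_i=vb_k$ with $v=1\in A^*$ and $k=i$, so that the second alternative holds trivially.

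For the inductive step I would write $u=a_ju'$, where $a_j$ is the first letter of $u$, so that $|u'|<|u|$, and split into three cases according to the value of $j$ relative to $i$. If $j=1$, then $(R_6)$ gives $b_ia_1\sim a_i$, hence $b_iu\sim a_iu'\in A^*$ and the first alternative holds with no appeal to the induction hypothesis. If $j>i$, then $(R_5)$ lets $b_i$ commute past $a_j$, giving $b_iu\sim a_j(b_iu')$; applying the induction hypothesis to $b_iu'$ yields $v'\in A^*$ and $1\leqslant k\leqslant i$ with $b_iu'\sim v'$ or $b_iu'\sim v'b_k$, and prepending $a_j$ produces the required form with $v=a_jv'$ and the same $k$. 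If $2\leqslant j\leqslant i$ (which forces $i\geqslant2$, so that $i-1\geqslant1$), then Lemma \ref{R5'} gives $b_ia_j\sim a_{j-1}b_{i-1}$, so that $b_iu\sim a_{j-1}(b_{i-1}u')$; applying the induction hypothesis to $b_{i-1}u'$ gives $v'\in A^*$ and $1\leqslant k\leqslant i-1$ with $b_{i-1}u'\sim v'$ or $b_{i-1}u'\sim v'b_k$, and prepending $a_{j-1}$ again yields the claim.

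The only point requiring care, and indeed the real content of the lemma, is verifying that the index $k$ on the surviving $b$-letter never exceeds $i$ and that at most one $b$-letter remains. In the commuting case the recursion keeps the index $i$ fixed, so $k\leqslant i$ is inherited directly; in the case governed by Lemma \ref{R5'} the index drops to $i-1$, and the induction hypothesis supplies $k\leqslant i-1\leqslant i$. Thus the bound $k\leqslant i$ is preserved in every branch. It is precisely the decrement of the index in Lemma \ref{R5'}, together with the absorption $b_ia_1\sim a_i$ in $(R_6)$, that guarantees no branch can raise the index beyond $i$ or increase the number of trailing $b$-letters, which is what makes the two stated alternatives exhaustive and completes the induction.

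I expect the case analysis itself to be routine; the genuine obstacle is the bookkeeping on the index $k$, and the argument is arranged so that the invariant $k\leqslant i$ is exactly what the three relations maintain.
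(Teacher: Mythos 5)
Your proof is correct, and it is essentially the paper's argument reflected left-to-right. The paper also inducts on $|u|$ using exactly the same three ingredients ($R_5$, $R_6$ and Lemma \ref{R5'}), but it peels off the \emph{last} letter, writing $u'=ua_j$, applies the induction hypothesis to $b_iu$ with the index $i$ unchanged, and only then resolves the interaction of the surviving trailing letter $b_k$ with $a_j$ via the case split $k<j$, $2\leqslant j\leqslant k$, $j=1$. You instead peel off the \emph{first} letter, $u=a_ju'$, resolve $b_ia_j$ immediately (cases $j>i$, $2\leqslant j\leqslant i$, $j=1$), and then recurse on $b_iu'$ or $b_{i-1}u'$. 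The one structural consequence of your orientation is that the induction hypothesis must sometimes be invoked at the smaller index $i-1$, so the statement being inducted on has to quantify over all indices simultaneously --- which it does, so this is harmless; in the paper's orientation the front index stays fixed and it is the trailing index $k$ that decrements, after the recursion rather than before it. Your base case ($v=1$, $k=i$), your case exhaustiveness, and your verification that the index never increases are all in order, so the two proofs are interchangeable; neither buys anything substantial over the other beyond taste in which end of the word drives the rewriting.
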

\begin{proof} 
For $|u|=0$, the lemma is immediate. 
So, assume by induction hypothesis that the lemma is valid for words $u\in A^*$ of length equal to $\ell$, for a given (fixed) $\ell\>0$. 
Let $u'\in A^*$ be such that $|u'|=\ell+1$. Then, $u'=ua_j$, for some $1\<j\<n-1$ and $u\in A^*$ such that $|u|=\ell$. 
Hence, by induction hypothesis, there exist $v\in A^*$ and $1\<k\<i\<n-1$ such that $b_iu\sim_R v$ or $b_iu\sim_R vb_k$.
If $b_iu\sim_R v$, then  $b_iu'=b_iua_j\sim_R va_j\in A^*$. 
On the other hand, if $b_iu\sim_R vb_k$, then 
$$
b_iu'=b_iua_j\sim_R vb_ka_j \sim_R 
\left\{
\begin{array}{ll}
va_jb_k & \mbox{if $1\<k<j\<n-1$}\\
va_{j-1}b_{k-1} & \mbox{if $2\<j\<k\<n-1$}\\
va_k & \mbox{if $j=1$},  
\end{array}
\right.
$$
as required. 
\end{proof}

Since we can write any word $w\in (A\cup B)^*$ in the form 
$$
w=u_0b_{i_1}u_1\cdots b_{i_k}u_k,
$$
with $1\<i_1\<\cdots\<i_k\<n-1$, $u_0,u_1,\ldots,u_k\in A^*$ and $k\>0$, by Lemma \ref{biu}, it is easy to conclude: 

\begin{corollary}\label{uv}
Let $w\in (A\cup B)^*$. Then, there exist $u\in A^*$ and $v\in B^*$ such that $w\sim_R uv$. 
\end{corollary}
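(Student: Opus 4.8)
The plan is to prove Corollary \ref{uv} by induction on the number $k$ of $b$-letters appearing in the word $w$, using Lemma \ref{biu} as the engine that lets us push a single $b$-letter rightward past a block of $a$-letters, turning the situation into one with fewer interleaved $b$-blocks. First I would write $w$ in the normal form $w=u_0b_{i_1}u_1\cdots b_{i_k}u_k$ with all $u_j\in A^*$; this is merely a syntactic regrouping of the word according to where the $b$-letters occur, and it is always possible with $k\>0$. The base case $k=0$ is immediate, since then $w=u_0\in A^*$ and we may take $u=u_0$ and $v=1\in B^*$.

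For the inductive step I would focus on the \emph{last} (rightmost) $b$-letter, namely $b_{i_k}$, together with the trailing block $u_k\in A^*$. Applying Lemma \ref{biu} to $b_{i_k}u_k$, I obtain either $b_{i_k}u_k\sim_R v'$ with $v'\in A^*$, or $b_{i_k}u_k\sim_R v'b_\ell$ with $v'\in A^*$ and $1\<\ell\<i_k$. In the first case, substituting back gives $w\sim_R u_0b_{i_1}u_1\cdots b_{i_{k-1}}(u_{k-1}v')$, a word with only $k-1$ interleaved $b$-letters, and the induction hypothesis finishes it. In the second case we get $w\sim_R u_0b_{i_1}u_1\cdots b_{i_{k-1}}u_{k-1}v'b_\ell$; here the single trailing $b_\ell$ is harmless, so I would apply the induction hypothesis to the prefix $u_0b_{i_1}\cdots b_{i_{k-1}}u_{k-1}v'$ (which has $k-1$ $b$-letters) to write it as $u\tilde v$ with $u\in A^*$, $\tilde v\in B^*$, and then conclude $w\sim_R u\tilde vb_\ell=u(\tilde vb_\ell)$ with $\tilde vb_\ell\in B^*$.

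The only subtlety worth flagging is bookkeeping rather than difficulty: Lemma \ref{biu} returns a single $b$-letter, not a $B^*$-word, so the recursion must be organized so that each application peels off exactly one $b$ at a time and the accumulated $b$-letters land together on the right; processing the rightmost $b$-block first makes this clean, since everything Lemma \ref{biu} produces to the right is already in the desired $A^*$-then-$B^*$ shape. I expect no genuine obstacle here — the real work has already been done in Lemma \ref{biu} (and in Lemma \ref{R5'}, which it invokes), so this corollary is essentially a packaging statement, and the induction on the count of $b$-letters is the natural way to assemble those local moves into a global normal form $uv$ with $u\in A^*$ and $v\in B^*$.
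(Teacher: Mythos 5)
Your proposal is correct and takes essentially the same route as the paper: the paper likewise decomposes $w$ as $u_0b_{i_1}u_1\cdots b_{i_k}u_k$ with $u_0,\ldots,u_k\in A^*$ and then invokes Lemma \ref{biu}, leaving the routine induction on the number of $b$-letters (which you carry out explicitly) to the reader. Your bookkeeping --- peeling off the rightmost $b$-letter via Lemma \ref{biu} and absorbing the single trailing $b_\ell$ it may produce into the $B^*$-suffix supplied by the induction hypothesis --- is exactly the intended way to assemble these local moves into the normal form $uv$.
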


\begin{lemma}\label{bn-1}
Let $v\in B^+$. Then, there exist $u\in A^+$ and $1\<\ell\<n-1$ such that $v\sim_R ub_{n-1}^\ell$. 
\end{lemma}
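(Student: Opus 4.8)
## Analyzing the Statement

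The plan is to first replace $v$ by the increasing canonical word of $W^+$, and then to peel off copies of $b_{n-1}$ from the right one at a time, each time extracting a single $a$-letter that can be slid to the far left precisely because the remaining $b$-indices are strictly smaller.

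First I would invoke Lemma \ref{can+}: since $v\in B^+$, there is a word $v'\in W^+$ with $v\sim_{R^+}v'$. As $R^+=R_3\cup R_4\subseteq R$, we have $\sim_{R^+}\,\subseteq\,\sim_R$, so $v\sim_R v'$. Moreover, both sides of every relation in $R^+$ are nonempty words, so a single rewriting replaces a nonempty factor by a nonempty factor and can never turn a nonempty word into the empty word; hence $v'$ is nonempty and we may write $v'=b_{j_1}\cdots b_{j_\ell}$ with $1\leqslant j_1<\cdots<j_\ell\leqslant n-1$ and $\ell\geqslant1$. Since the $j_t$ are distinct elements of $\{1,\ldots,n-1\}$, we automatically get $1\leqslant\ell\leqslant n-1$, which already secures the required bound on the exponent.

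The main step is the claim that, for each $0\leqslant t\leqslant\ell$,
\[
v'\sim_R a_{j_\ell}\cdots a_{j_{t+1}}\,b_{j_1}\cdots b_{j_t}\,b_{n-1}^{\,\ell-t}.
\]
I would prove this by downward induction on $t$, the case $t=\ell$ being exactly $v'$. For the inductive step, assuming the identity for some $t$ with $1\leqslant t\leqslant\ell$, I apply relation $(R_7)$ to the rightmost remaining original letter $b_{j_t}$, rewriting it as $a_{j_t}b_{n-1}$; this inserts an $a_{j_t}$ right after the block $b_{j_1}\cdots b_{j_{t-1}}$ and raises the power of $b_{n-1}$ to $\ell-t+1$. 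Because $j_1<\cdots<j_{t-1}<j_t$, relation $(R_5)$ lets me commute $a_{j_t}$ leftwards past each of $b_{j_{t-1}},\ldots,b_{j_1}$ in turn, landing it just after $a_{j_{t+1}}$; this is precisely the asserted identity for $t-1$. Taking $t=0$ gives $v'\sim_R a_{j_\ell}\cdots a_{j_1}\,b_{n-1}^{\,\ell}$, so with $u=a_{j_\ell}\cdots a_{j_1}\in A^+$ (nonempty since $\ell\geqslant1$) and $1\leqslant\ell\leqslant n-1$ we obtain $v\sim_R v'\sim_R u\,b_{n-1}^\ell$, as required.

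I do not expect a serious obstacle here once Lemma \ref{can+} is available; the two points requiring a little care are that the reduction to $W^+$ genuinely lands on a \emph{nonempty} word (so that $u\in A^+$ rather than merely $u\in A^*$) and that the resulting exponent satisfies $\ell\leqslant n-1$, both of which follow at once from the definition of $W^+$ and the fact that $\sim_{R^+}$ preserves nonemptiness. The real structural feature being exploited is that the strict inequalities $j_1<\cdots<j_\ell$ are exactly what make every use of $(R_5)$ legal; this is why passing to the $W^+$ form first, rather than attempting a direct induction on $|v|$ (which would force one to track how the index of a stray $b$ changes as it crosses $a$-letters), keeps the argument short.
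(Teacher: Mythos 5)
Your proof is correct, and it follows the paper's overall strategy: reduce $v$ to its $W^+$-normal form via Lemma~\ref{can+}, then repeatedly use $(R_7)$ to trade each $b$-letter for an $a$-letter followed by $b_{n-1}$. The difference lies in the commutation bookkeeping. The paper works left-to-right: it rewrites $b_{j_1}\sim_R a_{j_1}b_{n-1}$ and then pushes $b_{n-1}$ rightwards through $b_{j_2}\cdots b_{j_\ell}$ using $(R_4)$, which decrements the index of every $b$ it crosses, producing $u=a_{j_1}a_{j_2-1}\cdots a_{j_\ell-\ell+1}$. You work right-to-left: you rewrite $b_{j_t}\sim_R a_{j_t}b_{n-1}$ and commute $a_{j_t}$ leftwards through $b_{j_{t-1}},\ldots,b_{j_1}$ using $(R_5)$, legal precisely because $j_s<j_t$, so no indices shift and you end with $u=a_{j_\ell}\cdots a_{j_1}$, which is moreover already a word of $W^-$. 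Both derivations have essentially the same length; yours is slightly easier to verify because the indices are static throughout, whereas the paper's version keeps the unconverted $b$-letters contiguous at the cost of re-indexing them at every pass. One point clearly in your favour: you explicitly justify that the $W^+$-form of a nonempty word is nonempty (since both sides of every relation in $R^+$ are nonempty, the $\sim_{R^+}$-class of the empty word is a singleton), hence $u\in A^+$ rather than merely $A^*$, and that $\ell\leqslant n-1$ comes from the strict monotonicity of the indices in $W^+$; the paper takes both of these points for granted.
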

\begin{proof} 
By Lemma \ref{can+}, we can take $1\leqslant \ell\leqslant n-1$ and $1\leqslant j_1<\cdots<j_\ell\leqslant n-1$
such that $v\sim_{R^+} b_{j_1}\cdots b_{j_\ell}$. 
Then, denoting $\sim_R$ simply by $\sim$, we have 
\begin{align*}
v\sim b_{j_1}b_{j_2}\cdots b_{j_\ell} \sim a_{j_1} b_{n-1}b_{j_2}\cdots b_{j_\ell} \sim a_{j_1} b_{j_2-1}\cdots b_{j_\ell-1}b_{n-1}
\sim a_{j_1} a_{j_2-1}b_{n-1}b_{j_3-1}\cdots b_{j_\ell-1}b_{n-1}\sim  \\
\sim a_{j_1} a_{j_2-1}b_{j_3-2}\cdots b_{j_\ell-2}b_{n-1}^2 \sim\cdots\sim 
a_{j_1} a_{j_2-1}a_{j_3-2}\cdots a_{j_\ell-\ell+1}b_{n-1}^\ell, 
\end{align*}
as required. 
\end{proof} 

\begin{lemma}\label{aibn-1}
For $1\<\ell\<n-1$ and $n-\ell\<i\<n-1$, $a_ib_{n-1}^\ell\sim_R b_{n-1}^\ell$. 
\end{lemma}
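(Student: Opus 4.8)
The plan is to prove the identity by induction on $\ell$, using relation $(R_7)$ to trade a trailing $a_i$ against a letter $b_i$, and relation $(R_4)$ to slide factors $b_{n-1}$ to the left while incrementing indices. Throughout I write $\sim$ for $\sim_R$. For the base case $\ell=1$ the hypothesis $n-\ell\leqslant i\leqslant n-1$ forces $i=n-1$, and then $a_{n-1}b_{n-1}\sim b_{n-1}$ is exactly the instance $i=n-1$ of $(R_7)$.

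For the inductive step I would assume the statement for $\ell$ and take $n-\ell-1\leqslant i\leqslant n-1$. If $i=n-1$, then $(R_7)$ gives $a_{n-1}b_{n-1}^{\ell+1}=(a_{n-1}b_{n-1})b_{n-1}^{\ell}\sim b_{n-1}^{\ell+1}$ at once. If instead $i\leqslant n-2$, I first apply $(R_7)$ to obtain $a_ib_{n-1}^{\ell+1}\sim b_ib_{n-1}^{\ell}$; then the instance $j=n-2$ of $(R_4)$, namely $b_ib_{n-1}=b_{n-1}b_{i+1}$ (valid since $i\leqslant n-2$), yields $b_ib_{n-1}^{\ell}\sim b_{n-1}b_{i+1}b_{n-1}^{\ell-1}$; finally, reading $(R_7)$ backwards as $b_{i+1}=a_{i+1}b_{n-1}$ regroups this as $b_{n-1}a_{i+1}b_{n-1}^{\ell}$. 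Since $i\leqslant n-2$ and $i\geqslant n-\ell-1$ give $n-\ell\leqslant i+1\leqslant n-1$, the induction hypothesis applies to $a_{i+1}b_{n-1}^{\ell}$, whence $a_ib_{n-1}^{\ell+1}\sim b_{n-1}a_{i+1}b_{n-1}^{\ell}\sim b_{n-1}^{\ell+1}$, completing the induction.

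The step I would watch most carefully is the bookkeeping of indices and exponents: one must check that the single letter $b_{i+1}$ produced after each $(R_4)$ shift still has index $\leqslant n-1$ so that $(R_7)$ can be reused, and that there is always a spare factor $b_{n-1}$ on the right available to carry out the shift. Both are guaranteed exactly by the hypothesis $i\geqslant n-\ell$ (equivalently $n-1-i\leqslant\ell-1$), which is why this inequality appears in the statement; it is precisely the constraint ensuring that the chain of $(R_4)$-shifts raises the index up to $n-1$ before the supply of $b_{n-1}$'s is exhausted. I do not expect any genuine obstacle here, only the need to keep the index arithmetic exact.

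As an alternative, non-inductive presentation, one can start from $a_ib_{n-1}^{\ell}=(a_ib_{n-1})b_{n-1}^{\ell-1}\sim b_ib_{n-1}^{\ell-1}$ by $(R_7)$ and then perform all $(R_4)$-shifts at once, rewriting $b_ib_{n-1}^{\ell-1}\sim b_{n-1}^{\,n-1-i}\,b_{n-1}\,b_{n-1}^{\,\ell-1-(n-1-i)}=b_{n-1}^{\ell}$; the condition $\ell-1-(n-1-i)\geqslant 0$, i.e. $i\geqslant n-\ell$, is then visibly what makes the computation legitimate, and it recovers the claimed identity directly.
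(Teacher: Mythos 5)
Your proof is correct and rests on exactly the same mechanism as the paper's: one application of $(R_7)$ turns $a_ib_{n-1}^\ell$ into $b_ib_{n-1}^{\ell-1}$, and then repeated $(R_4)$-shifts $b_kb_{n-1}=b_{n-1}b_{k+1}$ raise the index to $n-1$, with the hypothesis $i\geqslant n-\ell$ guaranteeing enough trailing factors of $b_{n-1}$. In fact your closing non-inductive variant is the paper's proof verbatim, and your main argument merely repackages that iteration as an induction on $\ell$ (reading $(R_7)$ backwards to convert $b_{i+1}$ into $a_{i+1}b_{n-1}$ so the inductive hypothesis applies), which is a cosmetic difference.
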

\begin{proof} 
Denote $\sim_R$ simply by $\sim$. 
If $\ell=1$, then $i=n-1$ and we immediately have $a_{n-1}b_{n-1}\sim b_{n-1}$. 
So, suppose that $\ell\>2$. Then, we get  
$$
a_ib_{n-1}^\ell = a_ib_{n-1}b_{n-1}^{\ell-1}\sim b_ib_{n-1}^{\ell-1} =  b_ib_{n-1}b_{n-1}^{\ell-2} 
\sim b_{n-1}b_{i+1}b_{n-1}^{\ell-2} \sim \cdots \sim b_{n-1}^tb_{i+t}b_{n-1}^{\ell-t-1}, 
$$
for $1\< t\< n-i-1$ (notice that $n-\ell\< i$ implies that $t\<\ell-1$). 
In particular, 
$$
a_ib_{n-1}^\ell \sim b_{n-1}^{n-i-1}b_{n-1}b_{n-1}^{\ell-n+i}=b_{n-1}^\ell,
$$ 
as required. 
\end{proof}

Now, for $1\<\ell\<n-1$, let 
$$
W_\ell=\{a_{i_1}\cdots a_{i_k}b_{n-1}^\ell\mid \mbox{$0\<k\<n-1$ and $1\<i_k<\cdots<i_1\<n-\ell+k-2$}\}.
$$
Then, we have:

\begin{lemma}\label{well}
Let $u\in A^*$ and $v\in B^+$. Then, there exist $1\<\ell\<n-1$ and $w\in W_\ell$ such that $uv\sim_R w$. 
\end{lemma}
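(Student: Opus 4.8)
The plan is to massage $uv$ into the required normal form in three moves. First I would apply Lemma \ref{bn-1} to the $B$-part: since $v\in B^+$, there are $u'\in A^+$ and $1\leqslant\ell\leqslant n-1$ with $v\sim_R u'b_{n-1}^\ell$, so that $uv\sim_R (uu')\,b_{n-1}^\ell$ with $uu'\in A^*$. Then, because $R^-=R_1\cup R_2\subseteq R$ and hence $\sim_{R^-}\;\subseteq\;\sim_R$, Lemma \ref{can-} rewrites the $A$-word $uu'$ in strictly decreasing form; thus
\[
uv\sim_R a_{i_1}a_{i_2}\cdots a_{i_m}\,b_{n-1}^\ell,\qquad 1\leqslant i_m<\cdots<i_1\leqslant n-1.
\]
In particular the indices are distinct, so $m\leqslant n-1$, and the only condition still to be secured is the inequality $i_1\leqslant n-\ell+m-2$ from the definition of $W_\ell$ (which bears only on the largest index).

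Suppose this fails, i.e.\ $i_1\geqslant n-\ell+m-1$. I would then transport the offending top letter $a_{i_1}$ to the right end of the $A$-block by repeated use of $(R_2)$ in the form $a_pa_q\sim_R a_qa_{p-1}$, valid for $q<p\leqslant n-1$ since $a_qa_{p-1}\sim_R a_{(p-1)+1}a_q=a_pa_q$. Each swap pushes the moving letter one place to the right and lowers its index by one. One checks that at the $k$-th swap the moving index $i_1-(k-1)$ stays strictly above the index $i_{k+1}\leqslant i_1-k$ it passes, and never drops below $2$ (its smallest pre-swap value being $i_1-(m-2)\geqslant n-\ell+1\geqslant2$), so all $m-1$ swaps are legitimate. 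Afterwards the word has become $a_{i_2}\cdots a_{i_m}\,a_{i_1-(m-1)}\,b_{n-1}^\ell$, and the overshoot hypothesis gives $n-\ell\leqslant i_1-(m-1)\leqslant n-1$. Hence Lemma \ref{aibn-1} applies to the rightmost letter, $a_{i_1-(m-1)}b_{n-1}^\ell\sim_R b_{n-1}^\ell$, so
\[
a_{i_1}\cdots a_{i_m}\,b_{n-1}^\ell \sim_R a_{i_2}\cdots a_{i_m}\,b_{n-1}^\ell,
\]
a strictly decreasing word with one fewer $a$-letter.

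Repeating this removal while the top index exceeds the bound strictly decreases the number of $a$-letters, so the process halts after finitely many steps at a word $a_{i_1}\cdots a_{i_k}\,b_{n-1}^\ell$ with $i_1\leqslant n-\ell+k-2$ (or at $b_{n-1}^\ell$ itself), which lies in $W_\ell$; this gives the claim. The step I expect to require the most care is the trimming move: one must confirm that the chain of $(R_2)$-swaps can genuinely carry $a_{i_1}$ past all of $a_{i_2},\ldots,a_{i_m}$ (verifying $q<p$ and $2\leqslant p\leqslant n-1$ at every swap, which is where the strict decrease of the indices is used) and that the hypothesis $i_1\geqslant n-\ell+m-1$ is precisely what forces the resulting rightmost index $i_1-(m-1)$ into the range $[\,n-\ell,\,n-1\,]$ where Lemma \ref{aibn-1} is available. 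The observation that the $W_\ell$-constraint restricts only the maximal index, so that trimming the top letters suffices, is what makes the argument terminate cleanly; everything else is bookkeeping.
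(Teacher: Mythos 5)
Your proposal is correct and follows essentially the same route as the paper: reduce via Lemma \ref{bn-1} and Lemma \ref{can-} to a word $a_{i_1}\cdots a_{i_k}b_{n-1}^\ell$ with strictly decreasing indices, then use the $(R_2)$-commutations to carry over-large top letters to the right and absorb them with Lemma \ref{aibn-1}. The only difference is organizational: the paper identifies the cutoff index $t$ and moves all offending letters in one block, whereas you remove them one at a time in an iteration, which handles the same letters and even spares the paper's small existence claim for $t$.
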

\begin{proof} 
Denote $\sim_R$ simply by $\sim$. 
By Lemma \ref{bn-1}, there exist $u'\in A^+$ and $1\<\ell\<n-1$ such that $v\sim u'b_{n-1}^\ell$. 
Hence, $uv \sim uu'b_{n-1}^\ell$. 
Then, by Lemma \ref{can-}, there exist $0\<k\<n-1$ and $1\<i_k<\cdots<i_1\<n-1$ such that 
$uu'\sim a_{i_1}\cdots a_{i_k}b_{n-1}^\ell$. Hence, $uv\sim a_{i_1}\cdots a_{i_k}b_{n-1}^\ell$. 

If $i_1\<n-\ell+k-2$, then $uv\sim a_{i_1}\cdots a_{i_k}b_{n-1}^\ell\in W_\ell$. 

So, suppose that $i_1>n-\ell+k-2$. Then, it is easy to show that, there exists $1\<t\<k$ such that $i_t>n-\ell+k-t-1>i_{t+1}$, where (if $t=k$) $i_{k+1}=-1$. 
Next, observe that 
\begin{align*}
a_{i_1}a_{i_2}\cdots a_{i_k} \sim a_{i_2}a_{i_1-1}a_{i_3}\cdots a_{i_k} \sim a_{i_2}\cdots a_{i_k}a_{i_1-k+1}
\sim a_{i_3}\cdots a_{i_k}a_{i_2-k+2}a_{i_1-k+1} \sim \\ 
\sim \cdots \sim a_{i_{t+1}}\cdots a_{i_k}a_{i_t-k+t}a_{i_{t-1}-k+t-1}\cdots a_{i_1-k+1}, 
\end{align*}
$n-\ell\< i_t-k+t\<i_{t-1}-k+t-1\<\cdots\<i_1-k+1$, since $i_t>n-\ell+k-t-1$ and $i_t<\cdots<i_1$, and $i_{t+1}\<n-\ell+(k-t)-2$. 
Thus, by Lemma \ref{aibn-1}, we have 
$$
uv\sim a_{i_1}\cdots a_{i_k}b_{n-1}^\ell \sim a_{i_{t+1}}\cdots a_{i_k}b_{n-1}^\ell \in W_\ell, 
$$
as required. 
\end{proof}

Let $W=W^-\cup W_1\cup\cdots\cup W_{n-1}$. Therefore, 
as an immediate consequence of Corollary \ref{uv} and Lemmas \ref{can-} and \ref{well}, we obtain: 

\begin{corollary}\label{can}
Let $w\in (A\cup B)^*$. Then, there exists $w'\in W$ such that $w\sim_R w'$. 
\end{corollary}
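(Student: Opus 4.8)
The plan is to combine the three preceding results through a single case distinction, so the argument is essentially one of assembly. First I would invoke Corollary \ref{uv} to write $w\sim_R uv$ with $u\in A^*$ and $v\in B^*$. This already does the essential structural work: it reduces an arbitrary word over $A\cup B$ to one in which the $A$-letters are collected on the left and the $B$-letters on the right, which is exactly the shape handled by Lemmas \ref{can-} and \ref{well}.

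Next I would split according to whether $v$ is empty. If $v=1$, then $w\sim_R u\in A^*$, and Lemma \ref{can-} furnishes a word $w'\in W^-$ with $u\sim_{R^-}w'$. The one point deserving (trivial) care here is that $R^-=R_1\cup R_2$ is a subset of $R$, so $\sim_{R^-}\,\subseteq\,\sim_R$; this is what lets me transfer the reduction carried out with the smaller relation set into the ambient congruence, giving $u\sim_R w'$ and hence $w\sim_R w'\in W^-\subseteq W$. If instead $v\in B^+$, then Lemma \ref{well} applies directly, yielding some $1\<\ell\<n-1$ and $w'\in W_\ell$ with $uv\sim_R w'$, so that $w\sim_R w'\in W_\ell\subseteq W$.

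Since $W=W^-\cup W_1\cup\cdots\cup W_{n-1}$, in either case I obtain $w\sim_R w'$ for some $w'\in W$, which is the claim. I do not expect any genuine obstacle in this corollary: all the substantive combinatorics has already been absorbed into Lemmas \ref{biu}, \ref{bn-1}, \ref{aibn-1}, and \ref{well}, and the corollary merely records that separating the two blocks and normalising each (the $A$-block via $R^-$, the mixed tail via the $b_{n-1}$-powers of Lemma \ref{well}) leaves the word in the prescribed set $W$.
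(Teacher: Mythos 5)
Your proposal is correct and is exactly the paper's argument: the paper derives Corollary \ref{can} as an immediate consequence of Corollary \ref{uv}, Lemma \ref{can-} (for the case $v=1$, using $R^-\subseteq R$), and Lemma \ref{well} (for $v\in B^+$), just as you do. The only point the paper leaves implicit --- that $\sim_{R^-}\subseteq\sim_R$ --- you handle correctly.
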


Now, observe that 
$$
|W|=|W^-|+\sum_{\ell=1}^{n-1}|W_\ell| = 2^{n-1} + \sum_{\ell=1}^{n-1}\sum_{k=0}^{n-1}\binom{\min\{n-\ell+k-2,n-1\}}{k} = 
\sum_{\ell=0}^{n-1}\sum_{k=0}^{n-1}\binom{\min\{n-\ell+k-2,n-1\}}{k}
$$
and it is not difficult to show that $\sum_{\ell=0}^{n-1}\sum_{k=0}^{n-1}\binom{\min\{n-\ell+k-2,n-1\}}{k}=(n+1)2^{n-2}$, whence $|W|=|\IO_n|$. 
Therefore, we are in a position to apply Proposition \ref{ruskuc} to conclude our main result: 

\begin{theorem}\label{pres}
For $n\>1$, the monoid $\IO_n$ is defined by the presentation $\langle A\cup B\mid R\rangle$ on $2n-2$ generators and  $\frac{1}{2}(3n^2-n-2)$ relations. 
\end{theorem}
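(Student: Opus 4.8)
The plan is to invoke the Guess and Prove method (Proposition~\ref{ruskuc}) with $M=\IO_n$, generating set $X=A\cup B$, the relation set $R$, and the set of words $W=W^-\cup W_1\cup\cdots\cup W_{n-1}$. Three things must be verified: (i) the generators satisfy all relations of $R$ via $\varphi$; (ii) every word of $(A\cup B)^*$ is $\sim_R$-equivalent to some word in $W$; and (iii) $|W|\<|\IO_n|$. Condition (i) is already recorded in the preliminary section, where the equalities \eqref{eq1}--\eqref{eq3} were shown to imply that $A\cup B$ satisfies $R$. Condition (ii) is exactly the content of Corollary~\ref{can}, which I would cite directly. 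So the real work concentrates on condition (iii), namely the cardinality computation $|W|=|\IO_n|=(n+1)2^{n-2}$.

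For the cardinality count I would proceed as follows. Since $W^-$ is in bijection with the power set of $\{1,\ldots,n-1\}$ we have $|W^-|=2^{n-1}$, and the elements of $W^-$ may be regarded as the $\ell=0$ layer. For each $\ell$ with $1\<\ell\<n-1$, a word of $W_\ell$ is determined by a strictly decreasing sequence $i_k<\cdots<i_1$ in $\{1,\ldots,n-\ell+k-2\}$ of length $k$, so $|W_\ell|=\sum_{k=0}^{n-1}\binom{\min\{n-\ell+k-2,\,n-1\}}{k}$. Summing over $\ell$ and folding in $W^-$ yields the double sum displayed just before the theorem statement. The combinatorial identity
$$
\sum_{\ell=0}^{n-1}\sum_{k=0}^{n-1}\binom{\min\{n-\ell+k-2,\,n-1\}}{k}=(n+1)2^{n-2}
$$
is the crux, and I would establish it by reindexing with $m=n-\ell-2$ (so the inner binomial reads $\binom{\min\{m+k,n-1\}}{k}$) and splitting each inner sum at the threshold where $m+k$ first exceeds $n-1$; below the threshold the hockey-stick identity $\sum_k\binom{m+k}{k}=\binom{m+k+1}{k}$ telescopes the partial sums, and above it the truncated terms $\binom{n-1}{k}$ contribute tails of the binomial expansion of $2^{n-1}$. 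Assembling the two parts and summing the resulting geometric-type expression over $\ell$ gives $(n+1)2^{n-2}$.

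Once $|W|=|\IO_n|$ is confirmed, all three hypotheses of Proposition~\ref{ruskuc} hold, and the proposition delivers that $\IO_n$ is defined by $\langle A\cup B\mid R\rangle$. The generator count $|A\cup B|=2n-2$ and the relation count $\frac{1}{2}(3n^2-n-2)$ are read off directly from the definitions of the alphabet and of $R=R_1\cup\cdots\cup R_7$, matching the tally already given in the preliminaries. For small $n$ the statement should be checked separately or noted as degenerate: for $n=1$ the alphabet is empty and the presentation is the trivial one for $\IO_1=\T_1$, and for $n=2$ one has $A\cup B=\{a_1,b_1\}$ with $R$ reducing to the relations $a_1a_1=a_1$, $b_1b_1=b_1$, $b_1a_1=a_1$, $a_1b_1=b_1$, which one verifies define the three-element monoid $\IO_2$.

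The main obstacle I anticipate is purely the binomial identity in the second paragraph: the $\min$ inside the binomial coefficient forces a case split, and care is needed to keep the index ranges correct so that the hockey-stick telescoping and the binomial-tail contributions combine cleanly to $(n+1)2^{n-2}$. Everything else is either already proved earlier (the relations hold, and Corollary~\ref{can} gives the rewriting to $W$) or a routine bookkeeping of alphabet and relation sizes.
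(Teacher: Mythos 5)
Your proposal is correct and follows essentially the same route as the paper: the paper's proof is precisely an application of Proposition~\ref{ruskuc} with $W=W^-\cup W_1\cup\cdots\cup W_{n-1}$, using the verification of the relations from the preliminaries, Corollary~\ref{can} for the rewriting condition, and the count $|W|=\sum_{\ell=0}^{n-1}\sum_{k=0}^{n-1}\binom{\min\{n-\ell+k-2,\,n-1\}}{k}=(n+1)2^{n-2}=|\IO_n|$. The only difference is cosmetic: the paper dismisses the binomial identity as ``not difficult to show,'' whereas you sketch an explicit telescoping/binomial-tail argument for it and also check the degenerate cases $n=1,2$, both of which are reasonable additions rather than deviations.
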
 

\medskip

Taking into account the equalities $a_{n-1}=b_{n-1}a_1$ and $b_i=a_ib_{n-1}$, for $1\<i\<n-2$, by making these substitutions into the set of relations $R$ 
and eliminating, from among those relations obtained, both the immediately trivial and the trivially deducible, we obtain the following set $R'$ of $\frac{1}{2}(3n^2-7n+4)$ relations on the alphabet $a_1,\ldots,a_{n-2},b_{n-1}$: 
\begin{description}
\item ($R'_2$) $a_ia_j=a_{j+1}a_i$,~ $1\<i\<j\<n-3$; 
\item\qquad $a_ia_{n-2}=b_{n-1}a_1a_i$,~ $1\<i\<n-2$; 

\item ($R'_4$) $a_ib_{n-1}a_{j+1}b_{n-1}=a_{j+1}b_{n-1}a_{i+1}b_{n-1}$,~ $1\<i\<j\<n-3$; 
\item\qquad $a_ib_{n-1}^2=b_{n-1}a_{i+1}b_{n-1}$,~ $1\<i\<n-3$; 
\item\qquad $a_{n-2}b_{n-1}^2=b_{n-1}^2$; 

\item ($R'_5$) $a_ib_{n-1}a_j=a_ja_ib_{n-1}$,~ $1\<i<j\<n-2$; 
\item\qquad $a_ib_{n-1}^2a_1=b_{n-1}a_1a_ib_{n-1}$,~ $1\<i\<n-2$; 

\item ($R'_6$) $a_ib_{n-1}a_1=a_i$,~ $ 1\<i\<n-2$; 
\item ($R'_7$) $b_{n-1}a_1b_{n-1}=b_{n-1}$. 
\end{description}

Therefore, we also have: 

\begin{corollary}\label{pres2}
For $n\>3$, the monoid $\IO_n$ is defined by the presentation $\langle a_1,\ldots,a_{n-2},b_{n-1} \mid R'\rangle$ on $n-1$ generators and  $\frac{1}{2}(3n^2-7n+4)$ relations. 
\end{corollary}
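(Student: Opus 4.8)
The plan is to obtain $\langle a_1,\ldots,a_{n-2},b_{n-1}\mid R'\rangle$ from the presentation $\langle A\cup B\mid R\rangle$ of Theorem~\ref{pres} by a sequence of Tietze transformations, so that both presentations define the same monoid $\IO_n$. Since Theorem~\ref{pres} already settles that $\IO_n$ is defined by $\langle A\cup B\mid R\rangle$, the entire argument is a formal rewriting of the relation set: no further structural information about $\IO_n$ is needed. The hypothesis $n\>3$ guarantees that $\{a_1,\ldots,a_{n-2},b_{n-1}\}$ is a genuine generating set (indeed a minimal one, as recorded in the Preliminaries) and that the index ranges appearing in $R'$ are non-degenerate.

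First I would single out the relations that express the superfluous generators in terms of the retained ones. Relation $R_6$ with $i=n-1$ gives $a_{n-1}=b_{n-1}a_1$, while relations $R_7$ with $1\<i\<n-2$ give $b_i=a_ib_{n-1}$. Using these $n-1$ relations as definitions, I would eliminate (via Tietze transformations) the generators $a_{n-1}$ and $b_1,\ldots,b_{n-2}$, substituting $b_{n-1}a_1$ for $a_{n-1}$ and $a_ib_{n-1}$ for $b_i$ throughout the remaining relations. This leaves precisely the $n-1$ generators $a_1,\ldots,a_{n-2},b_{n-1}$ and removes the $n-1$ relations used as definitions, so that $\tfrac{3n(n-1)}{2}$ relations survive into the next stage.

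The core step is then to substitute these expressions into each of the families $R_1$--$R_7$ and to record the outcome, splitting according to whether an index attains the boundary value $n-1$. One finds that $R_2$ yields the two parts of $R'_2$ (the second arising from $j=n-2$), that $R_4$ yields the three parts of $R'_4$ (the last from $i=j=n-2$), and that $R_5$ yields the two parts of $R'_5$ (the second from $j=n-1$); relation $R_7$ with $i=n-1$ yields $R'_7$, and relations $R_1$ and $R_6$ with $1\<i\<n-2$ both yield $R'_6$. It then remains to certify that every relation not listed in $R'$ is trivial or deducible from $R'$: the image of $R_1$ with $i=n-1$ follows from $R'_7$, each image of $R_3$ with $1\<i\<n-2$ collapses to a trivial identity once $R'_6$ is applied, the image of $R_3$ with $i=n-1$ coincides with $R'_7$, and the images of $R_1$ (for $1\<i\<n-2$) duplicate those of $R_6$. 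Discarding these $2(n-1)$ superfluous relations leaves exactly $\tfrac{3n(n-1)}{2}-2(n-1)=\tfrac12(3n^2-7n+4)$ relations, matching $R'$, and Proposition~\ref{ruskuc} is no longer needed—the Tietze equivalence already yields the claim.

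The main obstacle is the exhaustive case analysis concealed in this last step: for each of the seven families one must track how the index constraints transform under the boundary cases $j+1=n-1$, $i+1=n-1$, and $j=n-1$, and then verify the short rewriting derivations that certify each discarded relation as trivial or as a consequence of the retained set $R'$. None of these derivations is deep, but getting the index bookkeeping exactly right—so that both the counts and the stated ranges in $R'$ agree—is where the care is required. Once this is carried out, the Tietze-transformation machinery guarantees that $\langle a_1,\ldots,a_{n-2},b_{n-1}\mid R'\rangle$ defines $\IO_n$.
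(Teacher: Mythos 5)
Your proposal is correct and follows essentially the same route as the paper: the paper likewise obtains $R'$ by Tietze transformations from Theorem~\ref{pres}, using $a_{n-1}=b_{n-1}a_1$ (from $R_6$, $i=n-1$) and $b_i=a_ib_{n-1}$ (from $R_7$, $1\leqslant i\leqslant n-2$) as defining relations, substituting into $R$, and discarding the trivial or deducible relations. Your index bookkeeping and the counts ($n-1$ definitions, $2(n-1)$ discarded duplicates/consequences, $\tfrac12(3n^2-7n+4)$ survivors) all check out against the paper's $R'$.
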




{\small \sf  
\noindent{\sc V\'\i tor H. Fernandes},
Center for Mathematics and Applications (NOVA Math)
and Department of Mathematics,
Faculdade de Ci\^encias e Tecnologia,
Universidade Nova de Lisboa,
Monte da Caparica,
2829-516 Caparica,
Portugal;
e-mail: vhf@fct.unl.pt.
} 

\end{document}